\documentclass[11pt]{amsart}
\setcounter{secnumdepth}{5}
\usepackage[T1]{fontenc}
\usepackage[latin1]{inputenc}
\usepackage{typearea}
\usepackage{ulem}
\usepackage{amsmath}
\usepackage{amssymb}
\usepackage{latexsym}
\usepackage{bbm}
\usepackage{enumerate}
\usepackage{amsthm}
\usepackage[all]{xy}
\usepackage{hhline}
\usepackage{epsf} 
\usepackage{cite}
\usepackage{verbatim}
\usepackage{mathtools}
\usepackage{verbatim}
\usepackage{dsfont}
\usepackage[top=2.5cm,left=2cm, right=2cm, bottom=2cm]{geometry}
\renewcommand{\1}{\mathds{1}}
\usepackage{color}
\newcommand{\red}{\color{black}}

\newcommand{\blue}{\color{black}}

\newtheorem{theorem}{{\sc Theorem}}[section]
\newtheorem{corollary}[theorem]{{\sc Corollary}}

\newtheorem{lemma}[theorem]{{\sc Lemma}}
\newtheorem{proposition}[theorem]{{\sc Proposition}}

\theoremstyle{remark}
\newtheorem{remark}[theorem]{{\sc Remark}}

\theoremstyle{definition}

\newtheorem{example}[theorem]{\sc example}
\newtheorem{condition}[theorem]{\sc condition}

%
%
\newcommand{\R}{\mathbb{R} }
\newcommand{\N}{\mathbb{N} }

\newcommand{\B}{\mathcal{B}}
\newcommand{\F}{\mathcal{F}}

\newcommand{\D}{\mathcal{D}}

\newcommand{\Prob}{\mathbb{P}}
\newcommand{\E}{\mathbb{E}}

\providecommand{\abs}[1]{\lvert #1\rvert}
\providecommand{\babs}[1]{\bigl\lvert #1\bigr\rvert}
\providecommand{\Babs}[1]{\Bigl\lvert #1\Bigr\rvert}

\providecommand{\norm}[1]{\lVert #1\rVert}

\DeclareMathOperator{\Var}{Var}

\DeclareMathOperator{\Cov}{Cov}

\DeclareMathOperator{\Inf}{Inf}

\DeclarePairedDelimiter{\floor}{\lfloor}{\rfloor}

%
%

\renewcommand{\phi}{\varphi}
\renewcommand{\epsilon}{\varepsilon}

\renewcommand{\rho}{\varrho}
\renewcommand{\P}{\mathbb{P}}
\allowdisplaybreaks

\begin{document}

\title[Functional de Jong theorems]{The multivariate functional de Jong CLT}

\author{{ Christian D\"obler, Miko{\l}aj Kasprzak and Giovanni Peccati}}
\date{ \today}

\begin{abstract} 
We prove a multivariate functional version of de Jong's CLT (1990) yielding that, given a sequence of vectors of Hoeffding-degenerate U-statistics, the corresponding empirical processes on 
$[0,1]$ weakly converge in the Skorohod space as soon as their fourth cumulants in $t=1$ vanish asymptotically and a certain strengthening of the Lindeberg-type condition is verified. As an application, we lift to the functional level the `universality of Wiener chaos' phenomenon first observed in Nourdin, Peccati and Reinert (2010).
 
\smallskip

\noindent {\sc Keywords}: U-statistics, functional limit theorems, contractions, product formulae, Hoeffding decompositions, universality
\smallskip

\noindent {\sc AMS 2010 Classification}: 60F17, 60D05, 62G20,

\end{abstract}
\maketitle

\section{Introduction and (simplified) statements of main results}\label{intro}

\subsection{Motivation}\label{motivation}

{\it Degenerate $U$-statistics} (see Section \ref{Ustats} for definitions) are the {\blue fundamental components} of {\it Hoeffding decompositions} for generic random variables of the type $f(X_1,...,X_n)$, with $X_1,...,X_n$ an independent sample, and are thus pivotal objects in stochastic analysis \cite{H48, Serf80, KB94, ES81, LRR16}. A special case of degenerate $U$-statistic is given by {\it homogeneous sums} \cite{MDO, NPR, Rot1}, which are in turn the {\blue building blocks} of {\it Gaussian and Poisson Wiener chaoses} \cite{PT11, NP12}, as well as the basis of the {\it Fourier-Walsh expansion} of mappings defined on discrete cubes \cite{OD14, GS15}.  For future reference, we recall that a homogeneous sum of order $p\geq 1$ is a random variable $Q_n(f_n,\mathfrak{X})$ with the form 
\[Q_n(f_n,\mathfrak{X})=\sum_{i_1,\dotsc,i_p=1}^n f_n(i_1,\dotsc,i_p)\prod_{l=1}^p X_{i_l}\,,\]
where $\mathfrak{X}=(X_i)_{i\in\N}$ is a sequence of independent, mean zero and unit variance random variables and 
$f_n$ is a symmetric kernel (i.e. $f_n(i_1,\dotsc,i_n)=f_n(i_{\pi(1)},\dotsc i_{\pi(n)})$ for all permutations $\pi$ of $\{1,\dotsc,n\}$ and all $1\leq i_1,\dotsc,i_n\leq n$) satisfying $f_n(i_1,\dotsc,i_p)=0$, whenever there are $j\not=l$ such that $i_l=i_j$. 

\smallskip

In the landmark paper \cite{deJo90}, P. de Jong proved the following surprising result: {\it If, for $n\in\N$, $W_n$ is a (non-symmetric) normalized degenerate $U$-statistic of a fixed order $p$, then $W_n$ converges in distribution to a standard normal random variable $Z$, provided $\lim_n \E[W_n^4] = \E[Z^4] = 3$ and a further Lindeberg-Feller negligibility condition is satisfied. }

\smallskip

De Jong's theorem is the ancestor of the class of `fourth moment theorems' established in the last decade for elements of the Wiener chaos of Gaussian and Poisson random measures -- see e.g. \cite[Chapter 5]{NP12}, as well as \cite{DP18b, DVZ}.

\smallskip

In \cite{DP17}, the authors used Stein's method in order to establish quantitative uni- and multivariate counterparts to de Jong's theorem. The techniques introduced in \cite{DP17} put forward several combinatorial quantities that proved useful in further situations: for instance, as demonstrated by the references \cite{DP18, DKP, Doe20} the technical findings of \cite{DP17} allow one to derive quantitative (functional) CLTs for symmetric $U$-statistics that are expressed in terms of purely analytical quantities, that is, norms of so-called of \textit{contraction kernels}.  

\smallskip

The aim of the present paper is to prove a multivariate functional version -- in the sense of weak convergence in the Skorohod space of c\`adl\`ag mappings -- of the main results of \cite{deJo90, DP17}, under a slightly stronger Lindeberg condition (see Condition \ref{lindcond} below). As a direct application, in Section \ref{homsums} we derive a functional version of the {\it universality of Wiener chaos} phenomenon first detected in \cite{NPR}. 
\smallskip

Our main findings appear in Theorem \ref{maintheo} and Theorem \ref{relcomp} below. To the best of our expertise, these statements are the first examples of fourth moment theorems for random elements taking value in a metric space without a Hilbert space structure. See \cite{BC20} for related results in a Dirichlet/Hilbert setting.

\bigskip

\noindent{\bf Acknowledgments}.
This work is part of the first author's habilitation thesis at Heinrich Heine Universit\"at
D\"usseldorf. The research leading to this paper was supported by the FNR grant \textbf{FoRGES(R-AGR-3376-10)}
at Luxembourg University. This work is also part of project \textbf{Stein-ML} that has received funding from the
European Union's Horizon 2020 research and innovation programme under the Marie Sk\l odowska-Curie
grant agreement \textbf{No 101024264}. We thank an anonymous referee for several useful remarks. {\it Data sharing not applicable to this article as no datasets were generated or analysed during the current study.}




\subsection{Setup and main theorems}
We now introduce our objects of study, fix some notation and conditions, and state our main results. For the sake of generality, and at the cost of a slightly heavier notation than the one adopted in \cite{deJo90, deJo89}, from now on we will work in the framework of triangular arrays. 

{\blue Throughout this section}, we fix positive integers $d$ and $1\leq p_1< p_2<\ldots< p_d$, where $d$ always denotes dimension. For $m\in\N$, suppose that $X_1^{(m)},\dotsc,X_{n_m}^{(m)}$ are independent random elements, defined on a suitable probability space $(\Omega_m,\F_m,\Prob_m)$ and assuming values in the respective 
measurable spaces $(E_1^{(m)},\mathcal{E}_1^{(m)}),\dotsc,(E_{n_m}^{(m)},\mathcal{E}_{n_m}^{(m)})$. We will denote by $\E_m$, $\Var_m$ and $\Cov_m$ the expectation, variance and covariance with respect to $\P_m$, respectively.
We will systematically assume that $\lim_{m\to\infty} n_m=+\infty$ and,
to simplify the notation, we will sometimes suppress the dependence on $m$. 
For $k=1,\dotsc,d$ and $m\in\N$ we assume that 
$W(k)=W^{(m)}(k)$ is a \textbf{ (completely) degenerate, non-symmetric $U$-statistic} of order $p_k$ based on $X_1^{(m)},\dotsc,X_{n_m}^{(m)}$, with the form
\[W(k)=W^{(m)}(k)=\sum_{J\in \D_{p_k}(n_m)} W^{(m)}_J(k)=\sum_{J\in \D_{p_k}(n_m)} W_J(k)\,,\]
where $W_J(k)$ is a degenerate kernel depending on $\{X_ n : n\in J\}$ -- see Section \ref{Ustats} below. {\blue Here and in what follows, for $p,n\in\N$, we write 
\begin{equation*}
 \D_p(n):=\{J\subseteq[n]\,:\, |J|=p\}
\end{equation*}
for the collection of all $\binom{n}{p}$ distinct $p$-subsets of $[n] = \{1,...,n\}$.}

W.l.o.g. we will assume that $\Var_m(W^{(m)}(k))=1$ for all $k=1,\dotsc,d$ and all $m\in\N$. 
 By $W=W^{(m)}$ we denote the vector $W=(W(1),\dotsc,W(d))^T$ and, for $t\in[0,1]$ let 
$\mathbf{W}_t=\mathbf{W}^{(m)}_t=(\mathbf{W}^{(m)}_t(1),\dotsc,\mathbf{W}^{(m)}_t(d))^T$ be given by 
\begin{equation}\label{defW1}
\mathbf{W}^{(m)}_t(k):=\sum_{J\in \D_{p_k}(\floor{ n_m t})} W_J^{(m)}(k)=\sum_{1\leq i_1<\ldots<i_{p_k}\leq \floor{ n_m t}} W^{(m)}_{\{i_1,\dotsc,i_{p_k}\}}(k)\,,\quad k=1,\dotsc,d,
\end{equation}
{\blue where  $\floor{x}$ denotes the integer part of the real number $x$.} Then, for each $m\in\N$, the process
\begin{equation}\label{defW2}
\mathbf{W}^{(m)}:=\bigl(\mathbf{W}^{(m)}(1),\dotsc, \mathbf{W}^{(m)}(d)\bigr)^T:= (\mathbf{W}^{(m)}_t)_{0\leq t\leq 1}
\end{equation}
 is an element of the Skorohod space $D([0,1];\R^d)$ of c\`adl\`ag functions from $[0,1]$ to $\R^d$. As anticipated, in this paper we provide sufficient conditions for the sequence $\mathbf{W}^{(m)}$, $m\in\N$, to converge in distribution to a continuous Gaussian process $\mathbf{Z}=(\mathbf{Z}(1),\dotsc,\mathbf{Z}(d))^T$ with respect to the Skorohod topology on $D([0,1];\R^d)$ (see e.g. \cite[Sections 14--15]{Bil1} for details).

Note that, by degeneracy and since the $p_k$'s are pairwise distinct, for $0\leq s\leq t\leq 1$ and $1\leq l,k\leq d$ we have 
\begin{align*}
\Cov\bigl(\mathbf{W}^{(m)}_t(l),\mathbf{W}^{(m)}_t(k)\bigr)&=\E_m\bigl[\mathbf{W}^{(m)}_t(l)\mathbf{W}^{(m)}_t(k)\bigr]
=\delta_{k,l} \sum_{J\in \D_{p_k}(\floor{ n_m s})} \E_m\bigl[W^{(m)}_J(k)^2\bigr]\\
&=\delta_{k,l}\sigma_{m,k}^2\bigl(\floor{ n_m s}\bigr)\,,
\end{align*}
where we set 
\[\sigma_{m,k}^2(j):=\sum_{J\in \D_{p_k}(j)} \E_m\bigl[W^{(m)}_J(k)^2\bigr]\,,\quad k=1,\dotsc,d,\; 0\leq j \leq n_m\,.\]
In particular, we have $\sigma_{m,k}^2(0)=0$ and $\sigma_{m,k}^2(n_m)=\Var(W^{(m)}(k))=1$.

\smallskip

We will now introduce and discuss the Conditions \ref{cond1}, \ref{lindcond} and \ref{fmcond} under which our main findings (Theorems \ref{maintheo} and \ref{relcomp}) are stated. 

\smallskip

\begin{condition}[Convergence of time-changes]\label{cond1}
There are functions $v_k:[0,1]\rightarrow\R$, $1\leq k\leq d$, such that the pointwise limits
\begin{equation}\label{varlim}
\lim_{m\to\infty}\sigma_{m,k}^2\bigl(\floor{ n_m s}\bigr)=v_k(s)\,,\quad s\in[0,1]\,,
\end{equation}
exist.
\end{condition}
Note that the $v_k$ are necessarily non-decreasing on $[0,1]$ and satisfy $v_k(0)=0$ as well as $v_k(1)=1$. However, we need not assume any regularity of the functions $v_k$.  

{\blue In order to concretely relate Condition \ref{cond1} to functional results}, we let $\mathbf{Z}=(\mathbf{Z}(1),\dotsc,\mathbf{Z}(d))^T$ be a centered Gaussian process with independent components, defined on a suitable probability space $(\Omega,\F,\P)$ and such that, for $1\leq k\leq d$, the covariance function of $\mathbf{Z}(k)=(\mathbf{Z}_t(k))_{t\in[0,1]}$ is given by 
\begin{equation}\label{covZ}
\Cov\bigl(\mathbf{Z}_s(k),\mathbf{Z}_t(k)\bigr)=\E \bigl[\mathbf{Z}_s(k)\mathbf{Z}_t(k)\bigr]=v_k(s\wedge t)\,,
\end{equation}
implying that $(\mathbf{Z}_1(1),\dotsc,\mathbf{Z}_1(d))^T$ is a standard Gaussian vector in $\R^d$. It is easily verified that the $d$-dimensional process ${\bf Z}$ has the same distribution as
\[
\left(\mathbf{B}_{v_1(t) }(1),..., \mathbf{B}_{v_d(t) }(d) \right)_{ t\in [0,1]},
\]
where $(\mathbf{B}(1),...,\mathbf{B}(d))$ is a standard Brownian motion (initialized at zero) in $\R^d$. This yields in particular that ${\bf Z}$ has a.s.-$\P$ continuous paths if and only if the functions $v_k$'s are continuous. 
\smallskip

Following \cite{deJo89,deJo90,DP17} we further introduce the quantities 
\begin{align*}
\rho_{m,k}^2&:=\max_{1\leq i\leq n_m}\sum_{\substack{J\in\D_{p_k}(n_m):\\ i\in J}} \E_m\bigl[W^{(m)}_J(k)^2\bigr]\quad\text{and}\\
D_{m,k}&:=\sup_{\substack{J\in\D_{p_k}(n_m):\\ \E_m[W^{(m)}_J(k)^2]>0} }\frac{\E_m[W^{(m)}_J(k)^4]}{\E_m[W^{(m)}_J(k)^2]^2}\,.
\end{align*}
Note that, e.g. by Jensen's inequality, one necessarily has that $D_{m,k}\geq1$. 
\begin{condition}[Reinforced Lindeberg condition] \label{lindcond}
 For all $1\leq k\leq d$, $\lim_{m\to\infty}D_{m,k}\rho_{m,k}^2=0$.
\end{condition}
This condition roughly ensures that none of the individual random variables $X_i^{(m)}$, $1\leq i\leq n_m$, has an asymptotically dominant impact on the variance of $W^{(m)}(k)$, $1\leq k\leq d$.  \\

The following condition is inherent to all de Jong type CLTs such as e.g. those proved in \cite{deJo90, DP17}.
 
\begin{condition}[Fourth moment condition]\label{fmcond}
 For all $1\leq k\leq d$ we have $\lim_{m\to\infty}\E_m\bigl[W^{(m)}(k)^4\bigr]=3$.
\end{condition}

The forthcoming Theorems \ref{maintheo} and \ref{relcomp} are the main achievements of this work.

\begin{theorem}\label{maintheo}
Let the above definitions and notation prevail and suppose that Conditions \ref{cond1}- \ref{fmcond} hold. Then, the process $\mathbf{Z}$ has continuous paths and, as $m\to\infty$, the distributions of the sequence $(\mathbf{W}^{(m)})_{m\in\N}$ weakly converge to the law of $\mathbf{Z}$, with respect to both the Skorohod and uniform topologies.
\end{theorem}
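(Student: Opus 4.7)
My plan is the classical one for weak convergence in the Skorohod space: establish convergence of finite-dimensional distributions, combine it with the relative compactness coming from Theorem \ref{relcomp}, and exploit continuity of the limiting paths to upgrade from the Skorohod to the uniform topology. Before addressing the f.d.d.\ convergence, I would argue that each variance function $v_k$ is continuous on $[0,1]$, so that $\mathbf{Z}$ almost surely has continuous paths. Being a non-decreasing pointwise limit, $v_k$ can only fail to be continuous via a jump, and I expect such jumps to be ruled out by combining the reinforced Lindeberg condition \ref{lindcond} with the fourth moment condition \ref{fmcond}: the former ensures that no single index contributes a non-negligible portion of the variance (since $\rho_{m,k}^2 \leq D_{m,k}\rho_{m,k}^2 \to 0$), while the latter, through the contraction-style lower bound for the fourth cumulant of a sum of Hoeffding degenerate terms (cf.\ \cite{DP17}), prevents blocks of indices from concentrating variance either; the same machinery will feed into Theorem \ref{relcomp}.

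For the finite-dimensional convergence, I would fix $0=t_0\leq t_1<\cdots< t_r\leq 1$ and look at the $dr$-dimensional array of increments
\[
\Delta_j^{(m)}(k) := \mathbf{W}^{(m)}_{t_j}(k)-\mathbf{W}^{(m)}_{t_{j-1}}(k) = \sum_{J\in\D_{p_k}(\floor{n_m t_j})\setminus\D_{p_k}(\floor{n_m t_{j-1}})} W_J^{(m)}(k).
\]
Each $\Delta_j^{(m)}(k)$ is itself a completely degenerate, non-symmetric $U$-statistic of order $p_k$. Complete degeneracy, together with the pairwise distinctness of $p_1,\dotsc,p_d$, forces the covariance matrix of this array to be block-diagonal: cross-$k$ covariances vanish by orthogonality of degenerate $U$-statistics of different orders, and within each fixed $k$ distinct time-increments involve disjoint families of index subsets and are therefore uncorrelated. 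The diagonal entries converge to $v_k(t_j)-v_k(t_{j-1})$ by Condition \ref{cond1}. I would then apply the multivariate de Jong CLT of \cite{DP17} to this array. Two hypotheses have to be checked for each $\Delta_j^{(m)}(k)$: the reinforced Lindeberg condition, which is inherited from Condition \ref{lindcond} since the restricted analogues of $\rho_{m,k}^2$ and $D_{m,k}$ are bounded above by the unrestricted ones; and the asymptotic vanishing of the normalised fourth cumulant.

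The main obstacle will be this second item, since Condition \ref{fmcond} only provides control over the full $W^{(m)}(k)=\sum_j \Delta_j^{(m)}(k)$. The resolution I envision relies on the identity expressing the fourth cumulant of a Hoeffding degenerate $U$-statistic as a non-negative combination of squared norms of its contraction kernels (developed in \cite{DP17}): the vanishing of the total fourth cumulant forces each such contraction norm to vanish, and the contraction norms of a sub-sum $\Delta_j^{(m)}(k)$ are dominated by those of the full $W^{(m)}(k)$, so the vanishing propagates and delivers the fourth cumulant asymptotics for $\Delta_j^{(m)}(k)$. Once the multivariate de Jong CLT yields joint Gaussian convergence of the increment array with the diagonal covariance identified above, a linear recombination produces the desired f.d.d.\ convergence of $\mathbf{W}^{(m)}$ to $\mathbf{Z}$. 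Combined with the relative compactness of Theorem \ref{relcomp}, this gives weak convergence in the Skorohod topology, and continuity of the limit paths automatically promotes this to convergence in the uniform topology (see \cite[Sections 14--15]{Bil1}).
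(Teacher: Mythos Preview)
Your overall architecture (finite-dimensional convergence plus relative compactness from Theorem \ref{relcomp}, then upgrade via continuity of the limit) is sound, and your use of Theorem \ref{relcomp} as a black box for tightness is legitimate. The serious gap is in your argument for the vanishing of the fourth cumulant of the increments $\Delta_j^{(m)}(k)$.

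You invoke an ``identity expressing the fourth cumulant of a Hoeffding degenerate $U$-statistic as a non-negative combination of squared norms of its contraction kernels'' from \cite{DP17}. No such identity exists at the level of generality of this paper. That kind of exact, sign-definite decomposition holds for Gaussian Wiener chaos and (approximately) for homogeneous sums via the product formula, but for general non-symmetric degenerate $U$-statistics the relevant objects in \cite{DP17} are the quantities $S_0$, $S_1$ and the Hoeffding variances $\Var(U_M(W))$ (Lemmas \ref{s0lemma}--\ref{varlemma2} here). Lemma \ref{varlemma1} only gives a one-sided inequality $\sum_M \Var(U_M(W))\leq \E[W^4]-3\sigma^4+\kappa_p\sigma^2\rho^2$; it does \emph{not} express the fourth cumulant as a sum of non-negative pieces that can then be split over sub-sums. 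Consequently, your domination step (``contraction norms of a sub-sum are dominated by those of the full sum'') has no object to act on, and the propagation of the fourth-moment condition to $\Delta_j^{(m)}(k)$ is unproved.

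The paper circumvents this by working with the \emph{values} $\mathbf{W}^{(m)}_{t_b}(k)$ rather than increments. The point is that $\mathbf{W}^{(m)}_{t_b}(k)=\sum_{J}b_J W_J^{(m)}(k)$ with $b_J=\prod_{j\in J}\1_{\{j\leq \floor{n_m t_b}\}}$, a \emph{rank-one} coefficient array in de Jong's sense, so Proposition 4.1.2 of \cite{deJo89} (Lemma \ref{cumlemma} here) transfers the fourth-cumulant asymptotics from $t=1$ to every $t_b$. Your increments correspond to coefficients $b_J=\1_{\{\max J\in(\floor{n_m t_{j-1}},\floor{n_m t_j}]\}}$, which are \emph{not} of rank one (take $p_k=2$ to see this), so de Jong's lemma does not apply. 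With the fourth cumulant at each fixed time in hand, the paper feeds the vector of values into the quantitative multivariate bound of Lemma \ref{mdimlemma} and controls the cross-terms via Corollaries \ref{varcor1}--\ref{varcor2}; the increment picture then follows by a linear map, not the other way around.

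Two smaller remarks. First, your plan to prove continuity of $v_k$ upfront is not needed and is only sketched; the paper obtains continuity of $\mathbf{Z}$ a posteriori from \cite[Theorem 13.4]{Bil} once tightness and f.d.d.\ convergence are in place (and gives a separate heuristic in Section 3.2 via the linearly interpolated process). Second, relying on Theorem \ref{relcomp} for tightness is fine logically, but be aware that its proof in the paper reuses the direct martingale/Doob $L^4$ argument of Section \ref{tightness}, so you are not avoiding that work.
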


An assumption such as Condition \ref{cond1} does not appear in the finite-dimensional settings of \cite{DP17,deJo90}. It cannot be easily dispensed with without affecting the unicity of the limit in the previous statement. The following result shows that, if Condition \ref{cond1} is removed, then the sequence $(\mathbf{W}^{(m)})_{m\in\N}$ is still relatively compact, and each limit law is the law of a continuous Gaussian process. 

\begin{theorem}\label{relcomp}
Suppose that the sequence $(\mathbf{W}^{(m)})_{m\in\N}$ satisfies Conditions \ref{lindcond} and \ref{fmcond}. Then, the collection of their laws is tight, hence relatively compact, in the class of distributions on the Skorohod space equipped with either the Skorohod or the uniform metric. Moreover, if the laws of a subsequence {\blue $(\mathbf{W}^{(m_j)})_{j\in\N}$} converge weakly, then the limit is the law of a continuous, {\red $d$-dimensional} centered Gaussian process {\red with independent components}. 
\end{theorem}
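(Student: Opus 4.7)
The plan is to deduce Theorem \ref{relcomp} from Theorem \ref{maintheo} via a Helly-type subsequential argument applied to the discrete variance functions $f^{(m)}_k(s) := \sigma_{m,k}^2(\floor{n_m s})$, $k = 1, \dotsc, d$. For each $m$ and $k$, $f^{(m)}_k$ is a non-decreasing step function on $[0,1]$ with $f^{(m)}_k(0) = 0$ and $f^{(m)}_k(1) = 1$. Given any subsequence of $(\mathbf{W}^{(m)})$, I would first apply Helly's selection theorem successively for $k = 1, \dotsc, d$ to extract a further subsequence $(m_j)$ along which, simultaneously for each $k$, $f^{(m_j)}_k$ converges pointwise to a non-decreasing function $v_k : [0,1] \to [0,1]$ at every continuity point of $v_k$. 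Since each $v_k$ has at most countably many discontinuities, a standard diagonal extraction over the union of these countable exceptional sets produces a further subsequence along which $\tilde v_k(s) := \lim_j f^{(m_j)}_k(s)$ exists for every $s \in [0,1]$ and every $k$, with $\tilde v_k$ non-decreasing and agreeing with $v_k$ outside the exceptional set.

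Along this extracted sub-subsequence, Condition \ref{cond1} is satisfied with the limits $\tilde v_k$, while Conditions \ref{lindcond} and \ref{fmcond} carry over unchanged. I would then invoke Theorem \ref{maintheo} to conclude that $\mathbf{W}^{(m_j)}$ converges weakly, both in the Skorohod and in the uniform topology on $D([0,1];\R^d)$, to a continuous, centered $d$-dimensional Gaussian process $\mathbf{Z}$ with independent components and covariance functions $\Cov(\mathbf{Z}_s(k),\mathbf{Z}_t(k)) = \tilde v_k(s \wedge t)$. The continuity of the paths of $\mathbf{Z}$, which is part of the conclusion of Theorem \ref{maintheo}, forces each $\tilde v_k$ to be continuous on $[0,1]$: indeed, any jump of $\tilde v_k$ at a point $s_0$ would produce a jump at $s_0$ of the centered Gaussian process with independent increments $\mathbf{Z}(k)$, contradicting continuity. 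Consequently $\tilde v_k \equiv v_k$, and the original Helly limits are themselves automatically continuous.

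Because every subsequence of $(\mathbf{W}^{(m)})$ admits a weakly convergent sub-subsequence whose limit is of the advertised form, Prokhorov's theorem yields tightness of the sequence of laws in the Skorohod topology on $D([0,1];\R^d)$, together with the characterization of subsequential limits stated in the moreover-part. Tightness in the uniform topology then follows because every subsequential limit is concentrated on continuous paths, on which the Skorohod and uniform topologies agree, so that any Skorohod-convergent subsequence converges in the uniform sense as well. The main conceptual hurdle is coordinating the successive Helly extractions with the diagonal extraction over the countable discontinuity sets of the $v_k$'s, so that a single sub-subsequence realizes the global pointwise convergence required to invoke Theorem \ref{maintheo}; the retroactive continuity obtained from that theorem then reveals this bookkeeping to be vacuous in hindsight.
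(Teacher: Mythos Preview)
Your argument is correct and reaches the conclusion, but it follows a genuinely different route from the paper. The paper first observes that the tightness argument carried out in Subsection~\ref{tightness} for Theorem~\ref{maintheo} never invokes Condition~\ref{cond1}; hence tightness is available directly under Conditions~\ref{lindcond} and~\ref{fmcond} alone. Only \emph{afterwards} does the paper take a convergent subsequence, deduce continuity of the limit via \cite[Theorem~13.4]{Bil}, use the continuous mapping theorem and uniform integrability to get convergence of the covariance matrices, and then identify the finite-dimensional distributions as Gaussian by re-running the machinery of Subsection~\ref{fidi} (in particular Lemma~\ref{cumlemma}) along that subsequence. You instead manufacture Condition~\ref{cond1} along a sub-subsequence via Helly's selection theorem and then invoke Theorem~\ref{maintheo} as a black box, obtaining convergence, continuity and Gaussianity in one stroke. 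Your route is more economical in that it avoids reopening the fidi analysis; the paper's route is more direct in that tightness is established first and does not rely on the converse half of Prokhorov's theorem. Interestingly, the paper notes right after the statement of Theorem~\ref{relcomp} that its proof implicitly yields precisely the Helly-type extraction you take as your starting point.

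One minor gap: in your last paragraph you deduce ``tightness in the uniform topology'' from the fact that every Skorohod-convergent subsequence also converges uniformly. Strictly speaking this yields only relative sequential compactness for the uniform metric; since $D([0,1];\R^d)$ with the sup norm is not separable, the converse Prokhorov direction is not automatically available there. The gap is easily closed---for instance, once Skorohod tightness is established and all subsequential limits are known to be concentrated on $C([0,1];\R^d)$, the standard modulus-of-continuity criterion for uniform tightness follows---but the step should be made explicit.
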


The detailed proofs of Theorems \ref{maintheo} and \ref{relcomp} are deferred to Section \ref{Proofs}. {{} We observe that the proof of Theorem \ref{relcomp} implicitly shows that, if $(\mathbf{W}^{(m)})_{m\in\N}$ satisfies Conditions \ref{lindcond} and \ref{fmcond}, then from every subsequence $(\mathbf{W}^{(m_j)})_{j\in\N}$ one can extract a sub-subsequence $(\mathbf{W}^{(m'_j)})_{j\in\N}$ verifying Condition 1.1 for some continuous functions $v_k$, whose definition depends on the choice of the subsequence. 

\medskip

Some representative examples of kernels directly verifying the assumptions of Theorem \ref{maintheo} are described in Example \ref{ex:fractional}, yielding in particular that, for all $p\geq 3$ and all integers $a\in \{2,...,p-1\}$, there exists a sequence of homogeneous sums whose associated empirical processes converge in distribution to a multiple of $B(t^{p/a})$, where $B$ is a standard Brownian motion issued from zero. It is important to notice that such a limit behaviour is in principle {\it not} achievable in the framework of symmetric and degenerate $U$-statistics. Indeed, in \cite[Corollary 3.7]{DKP} it is proved that, given a sequence of symmetric and degenerate $U$-statistics of order $p\geq 2$ verifying asymptotic relations that are roughly equivalent to Conditon \ref{fmcond}, the corresponding sequence of normalized empirical processes always converges in distribution to a multiple of $B(t^p)$.
}

\begin{remark}\label{mtrem}
\begin{enumerate}[(a)]
 \item Theorem \ref{maintheo} provides a strong functional extension of the finite-dimensional de Jong type theorems from \cite{deJo90,DP17} under very mild additional assumptions. {\blue As already observed, Condition \ref{cond1} and Condition \ref{fmcond} are indeed natural in this context \cite{deJo90,DP17}}, so that only Condition \ref{lindcond} might appear non-optimal, since ---whenever there is a $k$ such that $D_{k,m}$ is unbounded in $m$ --- it is strictly stronger than the usual negligibility condition demanding that $\lim_{m\to\infty}\rho_{m,k}^2=0$ for all $1\leq k\leq d$, see again \cite{deJo90,DP17}. To address this issue, we observe first that, in many instances, the $D_{k,m}$ are in fact bounded in $m$ for all $1\leq k\leq d$ and in this case, the two conditions are in fact equivalent. This happens in the case of homogeneous sums such that the underlying sequence $\mathfrak{X}$ has a uniformly bounded fourth moment (see Section \ref{homsums} below). Moreover, it is not clear whether the convergence of finite dimensional distributions of $\mathbf{W}$ to those of $\mathbf{Z}$ would still hold, if one replaced Condition \ref{lindcond} with the weaker variant (see Subsection \ref{fidi} for details). Some preliminary computations have indeed shown that, if one only assumed that $\lim_{m\to\infty}\rho_{m,k}^2=0$ for all $1\leq k\leq d$, one might have to replace Condition \ref{fmcond} with the assumption that $\lim_{m\to\infty}\E[\mathbf{W}_t(k)^4]=3 v_k(t)^2$ for all $t\in[0,1]$ and all $1\leq k\leq d$, which would be much more complicated to verify than Condition \ref{fmcond}. 
\item Note that Conditions \ref{cond1}- \ref{fmcond}, implying the \textit{joint convergence} of the random processes $\mathbf{W}^{(m)}(k)$, $1\leq k\leq d$, are just the aggregation of the conditions for the \textit{componentwise convergence} of these processes. 
 \item {\blue We find it} remarkable that one uniquely has to take into account the behaviour of the fourth moments of the components of $\mathbf{W}^{(m)}$ at time $1$ and not at other times $t<1$.  
\item We compare our Theorem \ref{maintheo} to a few existing functional CLTs (FCLTs) for degenerate (symmetric and non-symmetric) $U$-statistics in the scarce literature on this topic.
In accordance with the univariate case (see e.g. \cite{DM,Greg77,Serf80}), 
in order to satisfy a FCLT, a symmetric degenerate $U$-statistic of order $p>1$ must have a variable kernel, i.e. one that depends on the sequential parameter $m$. 
In the recent work \cite{DKP} we have proved FCLTs for this situation with sufficient conditions for convergence that are expressed in norms of contraction kernels and, hence, are completely analytical in nature. 
A more general class of statistics is given by the so-called {\it weighted, degenerate $U$-statistics of order $p$}. These have the form 
$U_n=\sum_{1\leq i_1<\ldots<i_p\leq n}a(i_1,\dotsc,i_p) \psi_n(X_{i_1},\dotsc,X_{i_p})$, where $X_1,\dotsc,X_n$ are i.i.d. elements of some space $E$, $ a(i_1,\dotsc,i_p)$, $1\leq i_1,\dotsc,i_p\leq n$ is a symmetric array of real numbers that vanishes on diagonals (i.e. $  a(i_1,\dotsc,i_p)=0$ if $i_l=i_k$ for some $l\not=k$) and $\psi_n$ is a symmetric, canonical kernel, i.e. 
$\E[\psi_n(X_1,\dotsc,X_p)|X_1,\dotsc,X_{p-1}]=0$ a.s. In particular, they constitute a strict subclass of the non-symmetric $U$-statistics investigated in the present paper. 
In the case $p=2$, \cite{Mik2} derives functional limit theorems for weighted degenerate $U$-statistics by using 
methods that heavily depend on the peculiarities of the case $p=2$. In the very recent paper \cite{DK21} the first and the second author have introduced a functional version of Stein's method of exchangeable pairs and applied it to prove quantitative, multidimensional FCLTs for degenerate weighted $U$-statistics. As explained in detail in \cite[Subsection 1.5]{DK21}, the limit theorems that build on the bounds of \cite{DK21} rather involve third absolute moments and absolute values of the coefficient arrays and, hence, are complementary to the present ones involving fourth moment conditions. As to FCLTs for homogeneous sums, apart from our paper \cite{DK21} we only mention the two references \cite{Mik1} and \cite{Basa}, where, again, \cite{Mik1} only considers the case $p=2$ of quadratic forms. The paper \cite{Basa} instead, considers homogeneous sums of arbitrary order $p$ but the results of this paper are in question. Indeed,  we have found that the argument leading to \cite[Theorem 1.1]{Basa} is flawed, as carefully explained in part 2 of \cite[Remark 5.6]{DK21}.
\end{enumerate}
\end{remark}

{ \subsection{About our approach}
Our method of proof turns out to be a combination of a quantitative, multivariate CLT for vectors of degenerate $U$-statistics to obtain convergence of finite-dimensional distributions (see Lemma \ref{mdimlemma} below), and a martingale representation, which we exploit via Doob's $L^4$-inequality, in order to derive tightness. In order to conclude from Condition \ref{fmcond} that the fourth cumulant 
of $\mathbf{W}^{(m)}_t(k)$, $1\leq k\leq d$, also converges to zero for all fixed $t\in[0,1)$ as $m\to\infty$, we elaborate on a result from the monograph \cite{deJo90} (see Lemma \ref{cumlemma} below). Moreover, for the application of Lemma \ref{mdimlemma}, we prove and make use of several new results about degenerate $U$-statistics and their Hoeffding decompositions in Section \ref{Ustats}, which are of independent interest.

Since in his original proof of the one-dimensional CLT \cite{deJo89}, de Jong made use of a classical martingale CLT due to Heyde and Brown \cite{HB}, it is a natural question whether one could similarly apply a FCLT for martingales, like for instance \cite[Theorem 2.34]{MPU}, in order to derive Theorem \ref{maintheo}. Indeed, adapting the computations beginning on page 286 of \cite{deJo89} and using our Lemma \ref{cumlemma}, one could verify that (an adaptation to deterministic time changes of) {\blue the ``square bracket condition''} (2.48) in \cite{MPU} holds for all fixed $t\in[0,1]$. Since the {\blue uniform negligibility} condition (2.46) {\blue in \cite{MPU}} is satisfied for any sequence of normalized $L^2$-martingales, \cite[Theorem 2.34]{MPU} could be applied to yield another proof of the \textit{one-dimensional version} of Theorem \ref{maintheo}.
 In order to derive the full multidimensional result, one could try to similarly adapt a multivariate FCLT for martingales to our setting, like for example \cite[Theorem 2.1]{Whitt}. However, the mixed square bracket condition (3) in \cite{Whitt} seems to be less {\blue amenable to analysis in the present setting} than its univariate counterpart, i.e. condition (2.48) in \cite{MPU}. Hence, adapting the techniques used in \cite{deJo90} so as to obtain our multivariate result appears {\blue considerably more difficult}. One of the main reasons for this 
fact is that de Jong's arguments rely {\blue on a large collection of exact combinatorial identities, that one uses in order} to verify the square bracket condition {\blue evoked above}. In contrast, our direct arguments merely use {\blue probabilistic inequalities}, which makes the structure of the proof significantly simpler and probably also more robust.
Finally, we mention that our arguments are in principle quantitative in nature and that, via an elaboration of Stein's method for diffusion approximation \cite{Bar, Kas1, Kas2, DK21}, one  
would be able to prove a quantitative version of Theorem \ref{maintheo}. Since, at the time of writing, we are only in the position to do this with respect to the stronger $L^1$-topology on $D([0,1];\R^d)$, we {\blue decide to leave this point open for further investigation}. 
}

\subsection{Application: universal FCLTS for homogeneous sums}\label{homsums} 

{\blue 
  We will now apply our main results to the setting of homogeneous sums: as a consequence, in Theorem \ref{universal} we will establish functional versions of the main findings of \cite{NPR}, where the authors proved the {\it universality of Gaussian Wiener chaos} with respect to the normal approximation of homogeneous sums. Such a result roughly states that, if a sequence of homogeneous sums with Gaussian arguments verifies a CLT, then so does the sequence obtained by replacing the Gaussian input with an arbitrary collection of independent random variables  (see e.g. \cite[Theorem 1.2]{NPR} for a precise statement). The findings from \cite{NPR} have found applications e.g. in the analysis of disordered systems \cite{CSZ17, CSZ20}, mathematical statistics \cite{koike}, random matrix theory and free probability \cite{NP-alea, NPPS}.
}

\smallskip

In this subsection, we let $(\Omega,\F,\P)$ denote a suitable probability space on which all random quantities subsequently dealt with are defined. We will consider the following sequences of real random variables: 
\begin{itemize}
\item $\mathfrak{X}=(X_i)_{i\in\N}$ denotes a (generic) sequence of independent, mean zero and unit variance random variables in $L^4(\P)$; 
\item $\mathfrak{G}=(G_i)_{i\in\N}$ is a sequence of independent standard normal random variables; and
\item $\mathfrak{P}=(P_i)_{i\in\N}$ is a sequence of independent, normalized Poisson random variables, i.e. there are $\lambda_i\in(0,\infty)$ and independent Poisson random variables $N_i$ with mean $\lambda_i$ such that $P_i=N_i/\sqrt{\lambda_i} -\sqrt{\lambda_i}$, $i\in\N$.
\end{itemize}

On the sequence $\mathfrak{X}$, we will further impose the integrability condition that
\begin{equation}\label{ufmb}
\beta:=\sup_{i\in\N}\E|X_i|^4<\infty.
\end{equation}
This is in particular satisfied if the random variables $X_i$, $i\in\N$, are i.i.d, since we have assumed that they are in $L^4(\P)$.
 We observe that, in the case of the Poisson sequence $\mathfrak{P}$, condition \eqref{ufmb} can be equivalently re-expressed in terms  of the coefficients $\lambda_i$. In fact, since $\E[P_i^4]=3 +\lambda_i^{-1}$, the sequence $\mathfrak{P}$ satisfies \eqref{ufmb} if and only if $\inf_{i\in\N}\lambda_i>0$.

In what follows, we let $n_m=m$ and, for given arrays $a^{(m)}_J(k)$, $J\in\D_{p_k}(m)$, $1\leq k\leq d$, of real numbers we suppose that $W_J^{(m)}(k)=a^{(m)}_J(k)\prod_{i\in J}X_i$ is given as 
\[ W_J^{(m)}(k)=a^{(m)}_J(k)\prod_{i\in J}X_i=:a^{(m)}_J(k) X_J\,.\]
Thus, for $k=1,\dotsc,d$,
\begin{equation}\label{defhomsum}
W^{(m)}(k)=\sum_{J\in\D_{p_k}(m)} a^{(m)}_J(k)\prod_{i\in J}X_i
\end{equation}
is a \textit{homogeneous sum} or \textit{homogeneous multilinear form} of order $p_k$ and, in particular, it is a non-symmetric, degenerate $U$-statistic. As before, we will {\blue systematically} assume that 
\[\Var\bigl(W^{(m)}(k)\bigr)=\sum_{J\in\D_{p_k}(m)} a^{(m)}_J(k)^2=1\,,\quad\text{for } k=1,\dotsc,d.\]
Then, as in \eqref{defW1} and \eqref{defW2}, we use these $d$ homogeneous sums to define the random element $\mathbf{W}^{(m)}=(\mathbf{W}^{(m)}(1),\dotsc,\mathbf{W}^{(m)}(d))^T$ with values 
in $D([0,1];\R^d)$.

Note that from \eqref{ufmb}, for $1\leq k\leq d$ and all $J\in\D_{p_k}(m)$ such that $a^{(m)}_J(k)\not=0$, we have that
\[\frac{\E\bigl[ W_J^{(m)}(k)^4\bigr]}{ \Bigr(\E\bigl[ W_J^{(m)}(k)^2\bigr]\Bigr)^2} =\frac{a^{(m)}_J(k)^4 \E\Bigl[\prod_{i\in J}X_i^4\Bigr]}{a^{(m)}_J(k)^4\biggl(\E\Bigl[\prod_{i\in J}X_i^2\Bigr]\biggr)^2}=\prod_{i\in J}\E[X_i^4]\leq\beta^{p_k}\,.\] 
Hence, also
\[\sup_{m\in\N} D_{m,k}\leq\beta^{p_k}<\infty\,.\]

In this context, one would typically like to replace the fourth moment condition, Condition \ref{fmcond}, by a condition, which is rather expressed in terms of the coefficient arrays. To this end, let us first define the symmetric functions $f^{(m)}_k:[m]^{p_k}\rightarrow\R$ by
\[ f^{(m)}_k(i_1,\dotsc,i_{p_k}):=\begin{cases}
\frac{1}{p_k!} a^{(m)}_{\{i_1,\dotsc,i_{p_k}\}}(k)\,,&\text{if } |\{i_1,\dotsc,i_{p_k}\}|=p_k\,,\\
0\,,&\text{otherwise.} 
\end{cases}\]
{{} Observe that the function $f_k^{(m)}$ {\it vanishes on diagonals}, that is, $f^{(m)}_k(i_1,\dotsc,i_{p_k}) = 0$ if two coordinates of $(i_1,\dotsc,i_{p_k})$ coincide.} Then, we have 
\begin{align*}
W^{(m)}(k)&=Q_{p_k}(m,f^{(m)}_k,\mathfrak{X}):=\sum_{i_1,\dotsc,i_{p_k}=1}^m f^{(m)}_k(i_1,\dotsc,i_{p_k}) X_{i_1}\cdot\ldots\cdot X_{i_{p_k}} \\
&=p_k!\sum_{1\leq i_1<\ldots<i_{p_k}\leq m} f^{(m)}_k(i_1,\dotsc,i_{p_k}) X_{i_1}\cdot\ldots\cdot X_{i_{p_k}}
\end{align*}
as well as
\begin{align*}
\rho_{m,k}^2&=\max_{1\leq i\leq m} \sum_{\substack{1\leq j_2<\ldots<j_{p_k}\leq m:\\j_2,\dotsc,j_{p_k}\not=i}} a^{(m)}_{\{i,j_2,\dotsc,j_{p_k}\}}(k)^2
=(p_k!)^2 \Inf_i\bigl(f^{(m)}_k\bigr)  \,,
\end{align*}
where 
\[\Inf_i\bigl(f^{(m)}_k\bigr)=\frac{1}{(p_k-1)!} \sum_{j_2,\dotsc,j_{p_k}=1}^m f^{(m)}_k(i,j_2,\dotsc,j_{p_k})^2=\sum_{1\leq j_2<\ldots<j_{p_k}\leq m}f^{(m)}_k(i,j_2,\dotsc,j_{p_k})^2\]
denotes the \textit{influence} of the random variable $X_i$ on the variance of $W^{(m)}(k)$ (see \cite{MDO, GS15}). Thus, we have shown that, under \eqref{ufmb}, Condition \ref{lindcond} is equivalent to 
\begin{equation}\label{influence}
\lim_{m\to\infty}\max_{1\leq i\leq m}\Inf_i\bigl(f^{(m)}_k\bigr)=0\quad\text{for}\quad 1\leq k\leq d.
\end{equation}

For $1\leq k\leq d$ and $1\leq r\leq p_k-1$, we define the \textit{contraction kernel} $f^{(m)}_k\star_r f^{(m)}_k:[m]^{2p_k-2r}\rightarrow\R$ by 
\begin{align*}
&f^{(m)}_k\star_r f^{(m)}_k(i_1,\dotsc,i_{p_k-r},j_1,\dotsc,j_{p_k-r})\\
&:=\sum_{l_1,\dotsc,l_r=1}^m f^{(m)}_k(i_1,\dotsc,i_{p_k-r},l_1,\dotsc,l_{r})f^{(m)}_k(j_1,\dotsc,j_{p_k-r},l_1,\dotsc,l_{r})\,.
\end{align*}
Consider the following condition.
\begin{condition}\label{contrcond}
For all $1\leq k\leq d$ and all $1\leq r\leq p_k-1$ it holds that $\lim_{m\to\infty}\norm{f^{(m)}_k\star_r f^{(m)}_k}_2=0$.
\end{condition}
Here, $\norm{f^{(m)}_k\star_r f^{(m)}_k}_2$ denotes the $L^2$-norm of the contraction kernel $f^{(m)}_k\star_r f^{(m)}_k$ with respect to the $(2p_k-2r)$-fold product of the counting measure on $[m]$, i.e. with respect to the counting measure on $[m]^{2p_k-2r}$.

\begin{proposition}\label{contrprop}
Suppose that \eqref{ufmb} and Condition \ref{contrcond} hold and that $p_k\geq 2$ for all $1,\dots,d$. Then, Conditions \ref{lindcond} and \ref{fmcond} are satisfied as well.
\end{proposition}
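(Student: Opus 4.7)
The plan is to derive Conditions \ref{lindcond} and \ref{fmcond} \emph{separately} from Condition \ref{contrcond} and \eqref{ufmb}, exploiting the explicit representation $W^{(m)}(k)=Q_{p_k}(m,f_k^{(m)},\mathfrak{X})$ of the components of $\mathbf{W}^{(m)}$ as homogeneous sums.

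For Condition \ref{lindcond}, the paragraph preceding the proposition already shows that $\sup_m D_{m,k}\leq \beta^{p_k}<\infty$ under \eqref{ufmb}, so it is enough to prove $\rho_{m,k}^2\to 0$, which via the identity $\rho_{m,k}^2=(p_k!)^2 \max_i \Inf_i(f_k^{(m)})$ reduces to the uniform vanishing of the individual influences. The key observation is that the diagonal of the contraction reproduces the influences, namely
$$\bigl(f_k^{(m)}\star_{p_k-1}f_k^{(m)}\bigr)(i,i)\;=\;\sum_{j_2,\dots,j_{p_k}=1}^m f_k^{(m)}(i,j_2,\dots,j_{p_k})^2\;=\;(p_k-1)!\,\Inf_i(f_k^{(m)}),$$
so that
$$\bigl\|f_k^{(m)}\star_{p_k-1}f_k^{(m)}\bigr\|_2^2\;\geq\;((p_k-1)!)^2\max_{1\leq i\leq m}\Inf_i(f_k^{(m)})^2.$$
Since $p_k\geq 2$, the index $r=p_k-1$ is admissible in Condition \ref{contrcond}, and we obtain $\max_i\Inf_i(f_k^{(m)})\to 0$, which together with the boundedness of $D_{m,k}$ yields Condition \ref{lindcond}.

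For Condition \ref{fmcond}, introduce the Gaussian counterpart $\widetilde{W}^{(m)}(k):=Q_{p_k}(m,f_k^{(m)},\mathfrak{G})$, which is a normalized element of the $p_k$th Wiener chaos associated to $\mathfrak{G}$. The Nualart-Peccati fourth moment formula (see e.g.\ \cite{NP12}) expresses
$$\E\bigl[\widetilde{W}^{(m)}(k)^4\bigr]-3\;=\;\sum_{r=1}^{p_k-1} c_{p_k,r}\bigl\|f_k^{(m)}\,\widetilde{\star}_r\, f_k^{(m)}\bigr\|_2^2$$
for explicit positive constants $c_{p_k,r}$ and the symmetrized contractions $\widetilde{\star}_r$; the elementary estimate $\|\cdot\,\widetilde{\star}_r\cdot\|_2\leq\|\cdot\,\star_r\cdot\|_2$ combined with Condition \ref{contrcond} forces this expression to zero. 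The universality principle for homogeneous sums of Nourdin, Peccati and Reinert \cite{NPR} then transports the conclusion to the general distribution $\mathfrak{X}$: under \eqref{ufmb} and the smallness of the influences verified above, one obtains
$$\bigl|\E[W^{(m)}(k)^4]-\E[\widetilde{W}^{(m)}(k)^4]\bigr|\;\xrightarrow[m\to\infty]{}\;0,$$
which combined with the previous display gives Condition \ref{fmcond}.

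The main technical obstacle lies in applying the universality estimate of \cite{NPR} to the \emph{unbounded} test function $\Phi(x)=x^4$, as the results there are stated for smooth compactly supported test functionals; a truncation argument based on the uniform $L^q$-boundedness of $W^{(m)}(k)$ (which follows from hypercontractivity in fixed-order homogeneous chaos under \eqref{ufmb}) remedies this. Alternatively, one may bypass \cite{NPR} by a direct diagrammatic expansion of $\E[W^{(m)}(k)^4]$ as a sum over quadruples of $p_k$-tuples, classifying contributions according to the multiplicity profile of repeated indices: the three ``fully paired'' Gaussian-type diagrams contribute exactly~$3$; contributions involving an index of multiplicity at least $3$ are bounded by $\beta^{p_k}\max_i\Inf_i(f_k^{(m)})$ up to combinatorial constants; and the remaining fully-paired cross diagrams coincide with $\|f_k^{(m)}\star_r f_k^{(m)}\|_2^2$ for $1\leq r\leq p_k-1$. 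All such error terms vanish under our hypotheses.
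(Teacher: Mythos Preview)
Your proof is correct and follows essentially the same route as the paper's, which simply cites the relevant displays from \cite{NPR}: your diagonal computation for $f_k^{(m)}\star_{p_k-1}f_k^{(m)}$ is exactly the content of display (1.9) in \cite{NPR}, and your ``direct diagrammatic expansion'' alternative for the fourth moment is precisely what the combination of \cite[Proposition 1.6]{NPR} and estimate (1.13) in \cite{NPR} delivers. The detour through the invariance principle for smooth compactly supported test functions, followed by truncation and hypercontractivity, is therefore unnecessary (though valid): \cite{NPR} already records the fourth-moment comparison $|\E[Q_{p_k}(m,f_k^{(m)},\mathfrak{X})^4]-3|$ as a standalone estimate in terms of contraction norms, without passing through bounded test functions.
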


\begin{proof}
We have already shown that Condition \ref{lindcond} holds if and only if \eqref{influence} holds. Relation \eqref{influence} however holds true by virtue of Condition \ref{contrcond} and display (1.9) in \cite{NPR}. That Condition \ref{fmcond} holds under Condition \ref{contrcond} as well, follows from a combination of \cite[Proposition 1.6]{NPR} and the estimate (1.13) in \cite{NPR}.
\end{proof}

Note further that the functions $\sigma_{m,k}^2(\floor{n_m s})$ from Condition \ref{cond1} are in this situation more explicitly given by 
\begin{align}\label{sigmahom}
\sigma_{m,k}^2(\floor{n_m s})&=\sum_{J\in\D_{p_k}(\floor{n_ms})} a_J^{(m)}(k)^2=p_k!\sum_{i_1,\dotsc,i_{p_k}=1}^{\floor{n_ms}} f^{(m)}_k(i_1,\dotsc,i_{p_k})^2=:Sf^{(m)}_k (s)\,.
\end{align}

\begin{theorem}\label{homsumtheo}
Suppose that the sequence $\mathfrak{X}$ of independent, mean zero and unit variance random variables satisfies \eqref{ufmb}. Moreover, assume that the sequences $(Sf^{(m)}_k)_{m\in\N}$, $1\leq k\leq d$, defined in \eqref{sigmahom}, satisfy Condition \ref{cond1}, that Condition \ref{contrcond} holds and that $p_k\geq 2$ for all $k=1,\dots,d$. Then, the sequence $(\mathbf{W}^{(m)})_{m\in\N}$ of processes, defined through \eqref{defW2} and \eqref{defhomsum}, converges in distribution with respect to the Skorohod topology to the continuous, centered Gaussian process $\mathbf{Z}=(\mathbf{Z}(1),\dotsc,\mathbf{Z}(d))^T$ with independent components, which is defined via \eqref{varlim}.
\end{theorem}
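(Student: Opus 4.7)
The plan is to deduce Theorem \ref{homsumtheo} as an immediate corollary of Theorem \ref{maintheo}, by verifying that Conditions \ref{cond1}, \ref{lindcond}, and \ref{fmcond} all hold under the hypotheses of the statement. Indeed, each $W^{(m)}(k)$, written as in \eqref{defhomsum}, is by construction a completely degenerate, non-symmetric $U$-statistic of order $p_k$ based on the independent inputs $X_1,\dotsc,X_m$, with $\Var(W^{(m)}(k))=1$, so the ambient setup of Theorem \ref{maintheo} is in force with $n_m=m$. In view of \eqref{sigmahom}, the functions $\sigma_{m,k}^2(\floor{n_m s})$ coincide with $Sf^{(m)}_k(s)$, so Condition \ref{cond1} is exactly the assumed convergence of $(Sf^{(m)}_k)_{m\in\N}$ to $v_k$.

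For the remaining two conditions, I would simply invoke Proposition \ref{contrprop}: the uniform fourth-moment bound \eqref{ufmb} together with Condition \ref{contrcond} and the restriction $p_k\geq 2$ (needed to make the contraction condition nontrivial) yields both the reinforced Lindeberg Condition \ref{lindcond} and the fourth moment Condition \ref{fmcond}. At this point, all three hypotheses of Theorem \ref{maintheo} are satisfied, so the conclusion of that theorem applies verbatim: the process $\mathbf{Z}$ has continuous paths (equivalently, the limit functions $v_k$ are continuous) and $\mathbf{W}^{(m)}$ converges in distribution to $\mathbf{Z}$ in the Skorohod topology on $D([0,1];\R^d)$. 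Since every step is a direct invocation of already-established results, there is no substantial obstacle to overcome; the only point requiring care is checking that the reformulations made in Subsection \ref{homsums} (in particular the identities for $\rho_{m,k}^2$ in terms of influences and for $\sigma_{m,k}^2$ in terms of $Sf^{(m)}_k$) indeed allow one to translate the abstract conditions of Theorem \ref{maintheo} into the contraction-type conditions used in the statement, which is handled precisely by Proposition \ref{contrprop} and equation \eqref{sigmahom}.
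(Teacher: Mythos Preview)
Your proposal is correct and follows exactly the same approach as the paper's proof, which simply states that the result follows from Theorem \ref{maintheo} and Proposition \ref{contrprop}. Your additional remarks about \eqref{sigmahom} guaranteeing Condition \ref{cond1} are accurate and only make the argument more explicit.
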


\begin{proof}
This follows from Theorem \ref{maintheo} and Proposition \ref{contrprop}.
\end{proof}

The proof of Theorem \ref{homsumtheo} (via the implicit use of Proposition \ref{contrprop}) {\blue strongly relies on the techniques developed in \cite{NPR}. As anticipated, we will now prove a functional version of the universality results from \cite{NPR}.} For completeness, we will also include the discussion of universality of Poisson homogeneous sums, as provided by \cite{PZ, DVZ}. 

\begin{theorem}[Functional universality of homogeneous sums]\label{universal}
With the above notation and definitions, consider the following assumptions: 
\begin{itemize}
\item[$(A_1)$] There are functions $v_k:[0,1]\rightarrow\R$ such that $\lim_{m\to\infty}(Sf^{(m)}_k)=v_k$ pointwise, $1\leq k\leq d$.
\item[$(A_{2,\mathfrak{X}})$] As $m\to\infty$, $\displaystyle(Q_{p_1}(m,f^{(m)}_1,\mathfrak{X}),\dotsc,Q_{p_d}(m,f^{(m)}_d,\mathfrak{X}))^T$ converges in distribution to a $d$-dimensional standard normal vector. 
\item[$(A_{3,\mathfrak{X}})$] The sequences $\displaystyle Q_{p_k}(m,f^{(m)}_k,\mathfrak{X})_{m\in\N}$, $1\leq k\leq d$, satisfy Condition \ref{fmcond}. 
\item[$(A_{4,\mathfrak{X}})$] The above defined sequence $\displaystyle(\mathbf{W}^{(m)}(1),\dotsc, \mathbf{W}^{(m)}(d))^T$, $m\in\N$, of processes based on $\mathfrak{X}$, converges in distribution with respect to the Skorohod topology to a continuous centered Gaussian process $\mathbf{Z}=(\mathbf{Z}(1),\dotsc,\mathbf{Z}(d))^T$ with independent components, whose covariance function is given by \eqref{varlim}.
\item[$(A_5)$] Condition \ref{contrcond} is satisfied.
\item[$(A_6)$] One has $\inf_{i\in\N}\lambda_i>0$.
\end{itemize}
Assume that Assumption $(A_1)$ holds and $p_k\geq 2$ for all $k=1,\dots,d$. Then, the following implications are in order:
\begin{align}\label{e:dagger}
(A_{3,\mathfrak{P}})&\Rightarrow  (A_{2,\mathfrak{G}})\Leftrightarrow(A_{3,\mathfrak{G}})\Leftrightarrow(A_{4,\mathfrak{G}})\Leftrightarrow(A_{5})\\
&\Rightarrow \forall\,\mathfrak{X}\text{ satisfying }\eqref{ufmb}:\bigl(\;(A_{2,\mathfrak{X}})\wedge(A_{3,\mathfrak{X}})\wedge(A_{4,\mathfrak{X}})\bigr).\notag
\end{align}
If, additionally to $(A_1)$, also $(A_6)$ holds and $p_k\geq 2$ for all $k=1,\dots,d$, then we have in fact that 
\begin{align}\label{e:doubledagger}
 (A_{3,\mathfrak{P}})&\Leftrightarrow  (A_{2,\mathfrak{G}})\Leftrightarrow(A_{3,\mathfrak{G}})\Leftrightarrow(A_{4,\mathfrak{G}})\Leftrightarrow(A_{5})
\\
&\Rightarrow \forall\,\mathfrak{X}\text{ satisfying }\eqref{ufmb}:\bigl(\;(A_{2,\mathfrak{X}})\wedge(A_{3,\mathfrak{X}})\wedge(A_{4,\mathfrak{X}})\bigr). \notag
\end{align}
 \end{theorem}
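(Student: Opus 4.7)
The strategy is to close a loop of implications centered on Condition \ref{contrcond}, i.e. $(A_5)$, using Theorem \ref{homsumtheo} of the present paper together with the universality theorems of \cite{NPR} (Gaussian case) and their Poisson counterparts from \cite{PZ, DVZ}.

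First I would establish that $(A_5)$ implies $(A_{2,\mathfrak{X}})\wedge(A_{3,\mathfrak{X}})\wedge(A_{4,\mathfrak{X}})$ for every sequence $\mathfrak{X}$ satisfying \eqref{ufmb}. Proposition \ref{contrprop} yields $(A_{3,\mathfrak{X}})$ directly, and feeding $(A_1)$, $(A_5)$, and \eqref{ufmb} into Theorem \ref{homsumtheo} yields $(A_{4,\mathfrak{X}})$. The assertion $(A_{2,\mathfrak{X}})$ then follows by evaluating the weak limit at $t=1$ via the continuous mapping theorem, since the projection $\mathbf{W}\mapsto\mathbf{W}_1$ is continuous from $D([0,1];\R^d)$ to $\R^d$ at paths that are continuous at $t=1$, and the Gaussian limit has almost surely continuous paths. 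Specializing to $\mathfrak{X}=\mathfrak{G}$ delivers $(A_{2,\mathfrak{G}})$, $(A_{3,\mathfrak{G}})$, and $(A_{4,\mathfrak{G}})$, which establishes the rightmost implication in both \eqref{e:dagger} and \eqref{e:doubledagger}.

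To close the central four-way equivalence it remains to prove $(A_{2,\mathfrak{G}})\Rightarrow(A_5)$ (the remaining implications among $(A_{2,\mathfrak{G}})$, $(A_{3,\mathfrak{G}})$, $(A_{4,\mathfrak{G}})$ are either covered above or trivial via $t=1$ evaluation). Since each Gaussian homogeneous sum $Q_{p_k}(m,f^{(m)}_k,\mathfrak{G})$ sits inside the $p_k$-th Wiener chaos of an isonormal Gaussian process (identify each $G_i$ with an element of an orthonormal system in the underlying Hilbert space), I would invoke the multivariate fourth moment theorem of Peccati--Tudor together with the Nualart--Peccati contraction characterization, or equivalently the combination of \cite[Proposition 1.6]{NPR} and \cite[Theorem 1.2]{NPR}, to force the $L^2$-norms $\norm{f^{(m)}_k\star_r f^{(m)}_k}_2$ to vanish for every $1\leq r\leq p_k-1$ and every $1\leq k\leq d$. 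This is exactly $(A_5)$.

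The Poisson implication $(A_{3,\mathfrak{P}})\Rightarrow(A_{2,\mathfrak{G}})$ requires realizing each $Q_{p_k}(m,f^{(m)}_k,\mathfrak{P})$ as a multiple Wiener--It\^o integral against the compensated Poisson random measure associated with $\mathfrak{P}$ and then invoking the multivariate Poisson fourth moment theorem of \cite{PZ, DVZ}: convergence of $\E[W^{(m)}(k)^4]$ to $3$ forces all star-contraction norms $\norm{f^{(m)}_k\star_r^l f^{(m)}_k}_2$ to vanish, and since $f\star_r f=f\star_r^r f$ is one of these, $(A_5)$ follows, hence $(A_{2,\mathfrak{G}})$ via the central equivalence. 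Under the stronger assumption $(A_6)$, $\mathfrak{P}$ itself satisfies \eqref{ufmb} because $\E[P_i^4]=3+\lambda_i^{-1}\leq 3+(\inf_j\lambda_j)^{-1}$, so applying the rightmost implication with $\mathfrak{X}=\mathfrak{P}$ recovers $(A_{3,\mathfrak{P}})$ from $(A_5)$ and closes \eqref{e:doubledagger}. The main obstacle I foresee is this Poisson step, since the joint version of the Poisson fourth moment theorem must simultaneously deliver the componentwise contraction conditions and the asymptotic independence of the different-order components; the latter is not automatic and must be extracted from a multivariate cumulant-based criterion, exploiting that the distinctness of the $p_k$'s kills the mixed covariances.
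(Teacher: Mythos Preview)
Your proposal is correct and follows essentially the same route as the paper: the central equivalences $(A_{2,\mathfrak{G}})\Leftrightarrow(A_{3,\mathfrak{G}})\Leftrightarrow(A_{5})$ come from \cite[Proposition 1.6]{NPR}, the implication $(A_5)\Rightarrow(A_{4,\mathfrak{G}})$ (and more generally $(A_5)\Rightarrow(A_{2,\mathfrak{X}})\wedge(A_{3,\mathfrak{X}})\wedge(A_{4,\mathfrak{X}})$ for any $\mathfrak{X}$ satisfying \eqref{ufmb}) comes from Proposition \ref{contrprop} and Theorem \ref{homsumtheo}, and the Poisson link is handled via \cite{DVZ}.

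Two remarks. First, your worry about the Poisson step is unnecessary: both $(A_{3,\mathfrak{P}})$ and $(A_5)$ are \emph{componentwise} conditions, so you only need the one-dimensional Poisson fourth moment theorem applied to each $k$ separately to extract the vanishing of the contractions; no joint or asymptotic-independence argument is required at this stage. The paper simply packages this implication by citing \cite[Theorem 1.11]{DVZ} directly for $(A_{3,\mathfrak{P}})\Rightarrow(A_{2,\mathfrak{G}})$. Second, for the reverse direction $(A_{2,\mathfrak{G}})\Rightarrow(A_{3,\mathfrak{P}})$ under $(A_6)$, the paper again invokes \cite[Theorem 1.11]{DVZ}, whereas you observe that $(A_6)$ forces $\mathfrak{P}$ to satisfy \eqref{ufmb} and then apply the already-established universal implication $(A_5)\Rightarrow(A_{3,\mathfrak{X}})$ with $\mathfrak{X}=\mathfrak{P}$. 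Your route is slightly more self-contained and avoids a second appeal to the Poisson literature; both are valid.
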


\begin{proof}
Suppose first that $(A_1)$ holds. That $(A_{2,\mathfrak{G}})$, $(A_{3,\mathfrak{G}})$ and $(A_{5})$ are all equivalent is a consequence of \cite[Proposition 1.6]{NPR} (even without assuming $(A_1)$). Of course, $(A_{4,\mathfrak{G}})$ implies  $(A_{2,\mathfrak{G}})$. By Theorem \ref{homsumtheo} applied to $\mathfrak{G}$, under $(A_1)$ we have that $(A_5)$ implies $(A_{4,\mathfrak{G}})$, since $\mathfrak{G}$ obviously satisfies \eqref{ufmb}. Moreover, again by Theorem \ref{homsumtheo} and by Proposition \ref{contrprop}, if the sequence $\mathfrak{X}$ satisfies \eqref{ufmb} and if $(A_{5})$ holds, then (since we have assumed $(A_1)$), we can conclude that $(A_{3,\mathfrak{X}})$ and $(A_{4,\mathfrak{X}})$ hold, immediately implying  $(A_{2,\mathfrak{X}})$ as well. Finally, \cite[Theorem 1.11]{DVZ} ensures that $(A_{3,\mathfrak{P}})$ implies $(A_{2,\mathfrak{G}})$.

Suppose now that both $(A_1)$ and $(A_6)$ hold. Then, \cite[Theorem 1.11]{DVZ} guarantees that $(A_{2,\mathfrak{G}})$ also implies $(A_{3,\mathfrak{P}})$.
\end{proof}

{\red

\begin{remark}[Removing condition $(A_1)$] If Condition $(A_1)$ is removed, then the conclusions of Theorem \ref{universal} continue to hold, provided condition $(A_{4,\mathfrak{X}})$ is replaced by 

\begin{itemize}

\item[$(A'_{4,\mathfrak{X}})$] The collection of distributions of the above defined sequence $\displaystyle(\mathbf{W}^{(m)}(1),\dotsc, \mathbf{W}^{(m)}(d))^T$, $m\in\N$, of processes based on $\mathfrak{X}$, is relatively compact with respect to the Skorohod topology, and every converging subsequence admits as a limit a continuous centered Gaussian process with independent components.
\end{itemize}
The proof of this fact follows the same lines as the proof of Theorem \ref{relcomp} (see Section \ref{ss:proofrelcomp}); the details are left to the reader.
\end{remark}
}

{{}

\begin{example}[Fractional products]\label{ex:fractional} For every integer $p\geq 3$ we will now demonstrate the existence of a sequence of symmetric kernels $f^{(m)} : \{1,...,m\}^p \to \mathbb{R}$, vanishing on diagonals and such that Conditions \ref{contrcond} and $(A_1)$ in Theorem \ref{universal} (for $d=1$) are satisfied, with $v_1(s) = v(s) =  s^{p/a}$, for some integer $2\leq a\leq p-1$. The definition of $f^{(m)}$ is based on a slight variation of the construction of {\it fractional cartesian products}, as described in \cite[Chapter XIII]{Blei-book}, as well as in \cite{BJ, NPR-ejp}. Fix $a,p$ as above, and consider an injective mapping $\varphi : \mathbb{N}^a\mapsto \mathbb{N}$ with the properties that, for all $k\geq 1$, (a)  $\varphi([k]^a) \subset [k^a]$ , and (b) $ \varphi([k+1]^a)\backslash \varphi([k]^a)\subset [(k+1)^a]\backslash [k^a]$. Such a mapping is easily defined by recursion on $k$. Observe in particular that Property (a) implies that, for all $N\geq 1$, the image of the restriction of $\varphi$ to $[\lfloor N^{1/a}\rfloor]^a$ is contained in $[N]$. Consider in addition a collection $\{S_1,...,S_p\}$ of non-empty distinct subsets of $[p]$, satisfying the properties that (i) $|S_i| = a$, and (ii) each index $i\in [p]$ appears in exactly $a$ of the sets $S_i$ (yielding in particular $[p] = \cup_i S_i$). For every $i$ and every vector ${\bf t} = (t_1,...,t_p)$, we set $\pi_{S_i}{\bf t} = (t_k : k\in S_i)$, where the indices belonging to $S_i$ are implicitly listed in increasing order. Our aim is to use the mapping $\varphi$ and the sets $S_i$ in order to define a {\it sparse} subset $F_m$ of $[m]^p$ for every integer $m> p^{a}$ (this last relation is required in order to ensure that $ [ \lfloor m^{1/a}\rfloor ]$ contains at least $p$ elements): the nature of the sparsity of $F_m$ will be encoded by the ratio $p/a$, corresponding to its {\it fractional dimension} \cite{Blei-book, BJ}.  For every $m\geq 1 $ we start by setting
$$
F^0_m := \left\{ {\bf k} = (k_1,...,k_p) : {\bf k} = (\varphi(\pi_{S_1} {\bf t}),..., \varphi(\pi_{S_p} {\bf t}) ), \mbox{for some } {\bf t}\in \Delta^p_{\lfloor m^{1/a}\rfloor} \right\} \subset [m]^p. 
$$
where, the symbol $\Delta_N^p$ indicates the class of all vectors $(t_1,...,t_p)$ such that $1\leq t_1<\cdots < t_p\leq N$; when $m\leq p^a$, $F^0_m$ can be empty. The injectivity of $\varphi$ readily implies that, when $m>p^a$, the entries of any ${\bf k} = (k_1,...,k_p)\in F^0_m$ are pairwise distinct, and also that, if ${\bf k}\neq {\bf k}'$ belong to $F_0^m$, then ${\bf k}$ cannot be obtained from ${\bf k}'$ via a permutation of its entries. We eventually set
$$
F_m : = {\bf sym} (F_m^0)  \subset [m]^p,\quad m\geq 1,
$$
that is, $F_m$ is the class of those $(k_1,...,k_p)$ such that $(k_{\sigma(1)},...,k_{\sigma(p)})\in F_m^0$ for some permutation $\sigma$. Immediate combinatorial considerations imply that there exists a finite constant $b>0$ such that, as $N \to \infty$, $|F_N| \sim b\cdot N^{p/a}$. For $m> p^a$, we define
$$
f ^{(m)} (i_1,...,i_p) := \frac{1}{( p! |F_m|)^{1/2}} {\bf 1}_{F_m} (i_1,...,i_p),\quad (i_1,...i_p) \in [m]^p. 
$$
The computations in \cite[Section 6.2 and Section 6.3]{NPR-ejp} readily imply that $f^{(m)}$ verifies Condition \ref{contrcond}. Moreover, since 
\begin{equation}\label{e:tricky}
|F_m \cap [ \lfloor tm \rfloor ] ^p | \sim |F_{\lfloor tm\rfloor}| \sim b\cdot t^{p/a} m^{p/a},
\end{equation}
 we deduce that Assumption $(A_1)$ in Theorem \ref{universal} is satisfied with $v(t) = t^{p/a}$. Using the chain of implications \eqref{e:dagger}, we infer that, given a sequence $\mathfrak{X}$ of centred unit variance random variables verifying \eqref{ufmb}, the corresponding empirical process $W^{(m)}$, defined via \eqref{defW1}, converges in distribution to $\{ B_{t^{p/a}} : t\in [0,1] \}$, where $B$ is a standard Brownian motion. Selecting pairwise distinct integers $p_1,...,p_d$ yields examples of $d$-dimensional sequences verifying Conditions \ref{contrcond} and $(A_1)$ in Theorem \ref{universal}. For the sake of completeness, a proof of \eqref{e:tricky} is sketched in Section \ref{ss:tricky}.

\end{example}

%
%

}

\begin{remark}
In the finite-dimensional setting considered in \cite{NPR} (see, in particular Theorem 1.2 therein), universality of Gaussian homogeneous sums is established under the weaker condition that 
$\sup_{i\in\N} \E|X_i|^3<\infty$ (of course without implying $(A_{3,\mathfrak{X}})$, in this case). This is not possible following our method of proof in the functional situation, since it is based on 
Theorem \ref{maintheo}, which is a genuine fourth moment theorem. On the contrary, the proofs in \cite{NPR} rely on a combination of Malliavin calculus on Gaussian spaces and an invariance principle for multilinear forms, proved in \cite{MDO} via an elaboration of the Lindeberg swapping trick. Since this invariance principle only requires finite third moments, it is possible to dispense with fourth moment conditions on $\mathfrak{X}$ in that framework. {\blue See also \cite[Theorem 4.2]{CSZ17}.    }
\end{remark} 

\begin{remark}
Proposition \ref{contrprop} and Theorems \ref{homsumtheo} and \ref{universal} all require that $p_k\geq 2$ for all $k=1,\dots,d$. This is because display (1.9) in \cite{NPR} used in their proofs relies on this assumption. Relation \eqref{influence}, however, still implies Condition \ref{lindcond}, even if $p_1=1$. In this case, writing $a^{(m)}_{ i}(1)$ for $a^{(m)}_{\lbrace i\rbrace}(1)$, we have $\Inf_i\bigl(f^{(m)}_1\bigr)=a^{(m)}_{i}(1)^2.$ Moreover, a straightforward computation shows that, for $p_1=1$, item a) below implies Condition \ref{fmcond}. 
Therefore, by part b) of Remark \ref{mtrem}, if $p_1=1$ and \eqref{ufmb}  and all of the following hold:
\begin{enumerate}[a)]
\item $\underset{{m\to\infty}}{\lim}\underset{{1\leq i\leq m}}{\max} a^{(m)}_{i}(1)^2=0$;
\item $\underset{{m\to\infty}}\lim\sum_{i=1}^{\lfloor n_ms\rfloor}a_{ i}^{(m)}(1)^2=v_1(s)$ for some $v_1:[0,1]\to\mathbb{R}$;
\item the assumptions of Theorem \ref{homsumtheo} are satisfied for the $(d-1)$-dimensional sequence of processes $\left(Q_{p_2}(m,f^{(m)}_2,\mathfrak{X}),\dots,Q_{p_d}(m,f^{(m)}_d,\mathfrak{X})\right)_{m\in\mathbb{N}}$ substituted in place of $\left(W^{m}\right)_{m\in\mathbb{N}}$, 
\end{enumerate}
 then the whole $d$-dimensional sequence $\left(Q_{p_1}(m,f^{(m)}_1,\mathfrak{X}),\dots,Q_{p_d}(m,f^{(m)}_d,\mathfrak{X})\right)_{m\in\mathbb{N}}$ converges in distribution with respect to the Skorohod topology to the continuous, centered Gaussian process $\mathbf{Z}=(\mathbf{Z}(1),\dotsc,\mathbf{Z}(d))^T$ with independent components, which is defined via \eqref{varlim}.
\end{remark}

\section{Degenerate $U$-statistics and Hoeffding decompositions}\label{Ustats}
In this section we prove and collect useful auxiliary results about degenerate $U$-statistics based on an independent sample. For reasons of simplicity, we abandon the setup of triangular arrays in this section. Thus, we let $(\Omega,\F,\P)$ be a generic probability space, on which 
independent random elements $X_1, X_2\dotsc$ are defined that have values in the respective measurable spaces $(E_1,\mathcal{E}_1), (E_2,\mathcal{E}_2),\dotsc$. Moreover, for $n\in\N$ fixed we let $[n]:=\{1,\dotsc,n\}$ and, for $J\subseteq [n]$, we define $\F_J:=\sigma(X_i,i\in J)$. 
Note that, whenever 
\begin{equation*}
 f:\prod_{j=1}^n E_j\rightarrow\R\quad\text{is}\quad\bigotimes_{j=1}^n\mathcal{E}_j-\B(\R)\text{ - measurable}
\end{equation*}
such that 
\begin{equation*}
Y:=f(X_1,\dotsc,X_n) \in L^1(\Prob)\,,
\end{equation*}
then there is a $\P$-a.s. unique representation of the form
\begin{equation}\label{HDgen}
Y=\sum_{M\subseteq[n]} Y_M =\sum_{s=0}^n\Biggl(\sum_{\substack{M\subseteq[n]:\\ \abs{M}=s}}Y_M\Biggr)
\end{equation}
{where, }for each $M\subseteq [n]$, the summand $Y_M$ is measurable with respect to $\F_M$ and, furthermore, 
\[\E[Y_M\,|\,\F_J]=0\quad\text{holds, whenever }M\nsubseteq J\,.\]
Note that, in particular, there are measurable functions
\begin{equation*}
 f_M:\prod_{j\in M}E_j\rightarrow\R\,,\quad M\subseteq[n]\,,
\end{equation*}
such that $Y_M=f_M(X_j,j\in M)$.
\noindent The representation \eqref{HDgen} is the {celebrated} \textbf{Hoeffding decomposition} of $Y$. The following well-known explicit formula for the \textbf{Hoeffding components} $Y_M$, $M\subseteq[n]$, is {easily deduced from the exclusion-inclusion principle}:
\begin{equation}\label{HDform}
 Y_M=\sum_{J\subseteq M}(-1)^{\abs{M}-\abs{J}}\E[Y\,|\,\F_J]\,,
\end{equation}
yielding in particular that $Y_\emptyset=\E[Y]$ a.s.-$\Prob$. Equation \eqref{HDform} directly implies the linearity of the Hoeffding decomposition. It should also be noted that, whenever $Y\in L^p(\P)$ for some $p\in[1,\infty]$, then automatically $Y_M\in L^p(\P)$ for all $M\subseteq[n]$. 
Moreover, if $Y\in L^2(\P)$, then its Hoeffding components are mutually orthogonal in $L^2(\P)$.

For $p\in[n]$, we call $Y$ a \textbf{(completely) degenerate $U$-statistic of order $p$}, based on $X_1,\dotsc,X_n$, if its Hoeffding decomposition \eqref{HDgen} is of the form 
\begin{equation}\label{degU}
 Y=\sum_{J\in\D_p(n)} Y_J\,,
\end{equation}
i.e. if $Y_M=0$ $\P$-a.s. whenever $|M|\not=p$. Here and in what follows we write 
\begin{equation*}
 \D_p(n):=\{J\subseteq[n]\,:\, |J|=p\}
\end{equation*}
for the collection of all $\binom{n}{p}$ different $p$-subsets of $[n]$.

Now suppose that we are given two positive integers $m$ and $n$, as well as $p\in[n]$ and $q\in[m]$ and consider two degenerate $U$-statistics $V$ based on $X_1,\dotsc,X_m$ and $W$ based on $X_1,\dotsc,X_n$ of respective orders $q$ and $p$. 
Hence, we have the respective Hoeffding decompositions 
\begin{equation*}
 V=\sum_{J\in\D_q(m)}V_J\quad\text{and}\quad W=\sum_{J\in\D_p(n)}W_J\,.
\end{equation*}
We further assume that $\E|W|^4, \E|V|^4<\infty$. Moreover, for $l\in[m]$ and $k\in[n]$ let us define  
\begin{equation*}
 \rho_{l,V}^2:=\max_{1\leq j\leq l}\sum_{\substack{J\in\D_q(l):\\ j\in J}} \E[V_J^2]\quad\text{and}\quad \rho_{k,W}^2:=\max_{1\leq j\leq k}\sum_{\substack{J\in\D_p(k):\\ j\in J}} \E[W_J^2]
\end{equation*}
as well as 
\begin{equation*}
 \sigma_{l,V}^2:=\sum_{J\in\D_q(l)} \E[V_J^2]\quad\text{and}\quad \sigma_{k,W}^2:=\sum_{J\in\D_p(k)} \E[W_J^2]\,.
\end{equation*}
In particular, we have $\sigma_{m,V}^2=\E[V^2]$ and $\sigma_{n,W}^2=\E[W^2]$ as well as the inequalities 
\begin{equation}\label{ineqrhosigma}
\sigma_{k,W}^2\leq\sigma_{n,W}^2\quad\text{and}\quad \rho_{k,W}^2\leq\rho_{n,W}^2
\end{equation}
and analogous ones for $V$ in place of $W$.
Note that $VW$ is an integrable function of $X_1,\dotsc,X_{n\vee m}$. Hence, it follows from \eqref{HDform} and the assumptions on $V$ and $W$ that it has a Hoeffding decomposition of the form 
\begin{equation*}
 VW=\sum_{\substack{M\subseteq[n\vee m]:\\ |M|\leq p+q}} U_M(V,W)\,.
\end{equation*}
For simplicity, we write $U_M(W)$ for $U_M(W,W)$ and $U_M(V)$ for $U_M(V,V)$ so that 
\begin{equation*}
 V^2=\sum_{\substack{M\subseteq[m]:\\ |M|\leq 2q}} U_M(V)\quad\text{and}\quad W^2=\sum_{\substack{N\subseteq[n]:\\ |N|\leq 2p}} U_N(W)
\end{equation*}
are the Hoeffding decompositions of $V^2$ and $W^2$, respectively. We will use the conventions $U_M(V)=0$ and $U_N(W)=0$ implicitly, whenever $M\not\subseteq[m]$ and $N\not\subseteq[n]$, respectively.
Let us define the collection $\mathcal{S}_0:=\mathcal{S}_0(m,n,q,p)$ of all quadruples $(I,J,K,L)\in\D_q(m)^2\times\D_p(n)^2$ such that 
\begin{enumerate}[(i)]
 \item $I\cap K=J\cap L=\emptyset$,
 \item $\emptyset\not=I\cap J=I\setminus(I\cap L)\not=I$,
 \item $\emptyset\not=J\cap I=J\setminus(J\cap K)\not=J$,
 \item $\emptyset\not=K\cap J=K\setminus(L\cap K)\not=K$ and 
 \item $\emptyset\not=L\cap I=L\setminus(L\cap K)\not=L$.
\end{enumerate}
Similarly, we denote by $\mathcal{S}_1:=\mathcal{S}_1(m,n,p)$ the collection of all quadruples $(I,J,K,L)\in\D_p(m\wedge n)^4$ such that 
\begin{enumerate}[(i)]
 \item $I\cap J=K\cap L=\emptyset$,
 \item $\emptyset\not=I\cap K=I\setminus(I\cap L)\not=I$,
 \item $\emptyset\not=J\cap K=J\setminus(J\cap L)\not=J$,
 \item $\emptyset\not=K\cap J=K\setminus(I\cap K)\not=K$ and 
 \item $\emptyset\not=L\cap I=L\setminus(L\cap J)\not=L$.
\end{enumerate}

Moreover, we define the quantities
\begin{align*}
S_0(V,W)&=\sum_{\substack{(I,J,K,L)\in\mathcal{S}_0}} \E\bigl[V_IV_JW_KW_L\bigr]\quad\text{and, if } p=q\,,\\ 
S_1(V,W)&=\sum_{\substack{(I,J,K,L)\in\mathcal{S}_1}}\E\bigl[V_IV_JW_KW_L\bigr]\,.
\end{align*}
In particular, we have $S_0(W,W)=S_1(W,W)$. More generally, if $p=q$, $m\leq n$ and $V_J=W_J$ for all $J\in\D_p(m)$, then $S_1(V,W)=S_0(V,V)$.
{
\begin{remark}
The quadruples $(I,J,K,L)\in\mathcal{S}_0$ play a crucial role in all proofs of de Jong type CLTs via the quantities $S_0(V,W)$ (see \cite{deJo90, DP17}). Note that these quadruples are all \textit{bifold}, meaning that each element $i\in I\cup J\cup K\cup L$ lies in exactly two of the sets $I,J,K,L$. In particular, there can be no \textit{free index}, which is an element that appears in only one of the sets $I,J,K,L$. Moreover, no two of the sets $I,J,K,L$ can be the same and each of these sets is disjoint to precisely one other among these sets. An analogous remark applies to the quadruples $(I,J,K,L)\in\mathcal{S}_1$. Note further that the existence of a free index for a quadruple $(I,J,K,L)$ necessarily implies $\E[V_IV_JW_KW_L]=0$ by degeneracy. This rule will be tacitly applied in what follows.    
\end{remark}
}
\begin{lemma}\label{s0lemma}
There exists a finite constant $C_{p,q}$, only depending on $p$ and $q$, such that 
\[S_0(V,W)\geq -C_{p,q}\max\bigl(\sigma_{m,V}^2\rho_{n,W}^2,\sigma_{n,W}^2\rho_{m,V}^2\bigr)\,.\]
\end{lemma}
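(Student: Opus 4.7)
The plan is to parametrize every quadruple $(I, J, K, L) \in \mathcal{S}_0$ by four pairwise disjoint pieces $a := I \cap J$, $b := I \cap L$, $c := J \cap K$, $d := K \cap L$, of sizes $\alpha := |a|$, $\beta := |b| = |c|$, $\delta := |d|$, subject to $\alpha + \beta = q$, $\beta + \delta = p$, and $\alpha, \beta, \delta \geq 1$ (in particular, $\mathcal{S}_0$ is empty, and the bound holds trivially with $C_{p,q} = 0$, unless $p, q \geq 2$). This decomposes $S_0(V, W) = \sum_{(\alpha, \beta, \delta)} T_{\alpha, \beta, \delta}$, where
\[
T_{\alpha, \beta, \delta} = \sum_{a, b, c, d} \E\bigl[ V_{a \cup b} V_{a \cup c} W_{b \cup d} W_{c \cup d} \bigr],
\]
with the sum running over pairwise disjoint subsets of the prescribed sizes. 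Since the number of admissible $(\alpha, \beta, \delta)$ is bounded by $pq$, it is enough to prove the desired lower bound separately for each $T_{\alpha, \beta, \delta}$.

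For fixed $(\alpha, \beta, \delta)$ and each pair $(a, d)$ of disjoint subsets of respective sizes $\alpha$ and $\delta$, I would introduce the auxiliary random variable
\[
Z_{a, d} := \sum_{b} V_{a \cup b} W_{b \cup d},
\]
where the sum is over all $\beta$-subsets $b$ disjoint from $a \cup d$. Then trivially $\sum_{a, d} \E[Z_{a, d}^2] \geq 0$. Expanding the square and sorting terms by whether or not $b \cap c = \emptyset$ yields
\[
\sum_{a, d} \E[Z_{a, d}^2] = T_{\alpha, \beta, \delta} + R_{\alpha, \beta, \delta},
\]
where $R_{\alpha, \beta, \delta}$ collects precisely those contributions with $b \cap c \neq \emptyset$. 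Hence $T_{\alpha, \beta, \delta} \geq - R_{\alpha, \beta, \delta}$, and the proof reduces to the upper bound $|R_{\alpha, \beta, \delta}| \leq C \, \max(\sigma_{m, V}^2 \rho_{n, W}^2, \sigma_{n, W}^2 \rho_{m, V}^2)$ for some $C = C(p, q)$.

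To handle the remainder, I would stratify according to $e := b \cap c$ of size $r \in \{1, \ldots, \beta\}$, writing $b = b_0 \sqcup e$ and $c = c_0 \sqcup e$. A first Cauchy--Schwarz at the level of each expectation, namely
\[
\bigl|\E[V_{a \cup b} V_{a \cup c} W_{b \cup d} W_{c \cup d}]\bigr| \leq \Enorm{V_{a \cup b} W_{b \cup d}} \, \Enorm{V_{a \cup c} W_{c \cup d}},
\]
followed by a second Cauchy--Schwarz on the outer sum (exploiting the symmetry $b_0 \leftrightarrow c_0$), reduces matters to controlling expressions of the form
\[
\sum_{a, d, e, b_0} \E\bigl[V_{a \cup b_0 \cup e}^2 W_{b_0 \cup e \cup d}^2\bigr],
\]
weighted by combinatorial factors counting the discarded $c_0$-summation. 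The indices collected in $e$ appear simultaneously in all four sets $I, J, K, L$, and this is what forces the bound to involve a factor of $\rho^2$: one designates a pivot index $i \in e$, concentrates the sum over all configurations containing $i$ into a $\rho^2$-factor (either $\rho_{m,V}^2$ or $\rho_{n,W}^2$, depending on the side from which the pivot is extracted), and repackages the remaining unconstrained summations as a total variance $\sigma^2$ (correspondingly $\sigma_{n,W}^2$ or $\sigma_{m,V}^2$), after separating the squared factors via conditional independence of $V$- and $W$-components given their shared indices.

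The main obstacle is precisely this last estimate: turning the mixed quantity $\E[V_I^2 W_L^2]$ with $I, L$ sharing the index block $b_0 \cup e$ into a clean product of a $\rho^2$- and a $\sigma^2$-factor. One must choose carefully whether to peel off the pivot $i \in e$ from the $V$-side or the $W$-side---this is what produces the two alternatives inside the $\max$---and then absorb the combinatorial multiplicities coming from the choice of $r$, $e$, $b_0$, $c_0$, and the unused range of $c_0$, into a finite constant depending only on $p$ and $q$. Summing the resulting bound over $r \in \{1, \ldots, \beta\}$ and over the at most $pq$ admissible triples $(\alpha, \beta, \delta)$ produces the final constant $C_{p, q}$.
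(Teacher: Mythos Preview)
Your overall strategy---parametrizing $\mathcal{S}_0$ by the four pairwise disjoint blocks $a=I\cap J$, $b=I\cap L$, $c=J\cap K$, $d=K\cap L$ of sizes $(\alpha,\beta,\beta,\delta)$, writing each piece $T_{\alpha,\beta,\delta}$ as $\sum_{a,d}\E[Z_{a,d}^2]-R_{\alpha,\beta,\delta}$, and then bounding the remainder from above---is exactly the architecture of Propositions~3.5 and~3.6 of \cite{DP17}, which is precisely what the paper invokes (the paper gives no independent argument). The parametrization is a bijection onto $\mathcal{S}_0$, and the sum-of-squares identity $T_{\alpha,\beta,\delta}+R_{\alpha,\beta,\delta}=\sum_{a,d}\E[Z_{a,d}^2]\geq 0$ is correct.

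The execution of the remainder bound, however, has a gap. After your first Cauchy--Schwarz one has
\[
|R_r|\leq\sum_{a,d,e,b_0,c_0}\bigl\|V_{a\cup e\cup b_0}W_{e\cup b_0\cup d}\bigr\|_2\,\bigl\|V_{a\cup e\cup c_0}W_{e\cup c_0\cup d}\bigr\|_2\,,
\]
and by the $b_0\leftrightarrow c_0$ symmetry (or simply by dropping the disjointness constraint) the right-hand side equals $\sum_{a,d,e}\bigl(\sum_{b_0}\|V_{a\cup e\cup b_0}W_{e\cup b_0\cup d}\|_2\bigr)^2$. Passing from this square to $\sum_{b_0}\|\cdot\|_2^2=\sum_{b_0}\E[V_I^2W_L^2]$ costs a factor equal to the number of admissible $b_0$, which is of order $\binom{m\wedge n}{\beta-r}$. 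Your ``combinatorial factors counting the discarded $c_0$-summation'' therefore grow with $m$ and $n$ and cannot be absorbed into a constant depending only on $p,q$. Even setting this aside, the surviving sum $\sum_{a,d,e,b_0}\E[V_I^2W_L^2]$ with $I\cap L=e\cup b_0\neq\emptyset$ resists the reduction you describe: conditioning on the shared indices only gives $\E[V_I^2W_L^2]=\E\bigl[\E[V_I^2\,|\,\F_{e\cup b_0}]\,\E[W_L^2\,|\,\F_{e\cup b_0}]\bigr]$, an expectation of a product of nonnegative random variables rather than a product of expectations, and there is no general inequality $\E[V_I^2W_L^2]\leq C\,\E[V_I^2]\,\E[W_L^2]$ for degenerate kernels without invoking a fourth-moment ratio. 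The pivot-in-$e$ heuristic is the right intuition, but the specific Cauchy--Schwarz route through the mixed second moments $\E[V_I^2W_L^2]$ does not land on a $C_{p,q}\,\sigma^2\rho^2$ bound; the argument in \cite{DP17} (see in particular formula~(3.18) there) organizes the remainder differently so as to avoid both obstructions.
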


\begin{proof}
This follows from a straightforward generalization of Propositions 3.5 and 3.6 of \cite{DP17} to the situation of possibly different $m$ and $n$ and possibly non-unit variances .
We omit the details here.
\end{proof}

\begin{lemma}\label{s0lemma2}
We have that
\[S_0(W,W)\leq \E[W^4]-3\sigma_{n,W}^4+2p\sigma_{n,W}^2\rho_{n,W}^2  \,.\]
\end{lemma}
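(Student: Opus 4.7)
My strategy is to expand $\E[W^4]$ combinatorially over ordered $4$-tuples $(I,J,K,L)\in\D_p(n)^4$ and carefully isolate the contribution of $\mathcal{S}_0$, while showing that the remainder approximately equals $3\sigma_{n,W}^4$ up to an error controlled by $2p\sigma_{n,W}^2\rho_{n,W}^2$. Concretely, I would first write
\[ \E[W^4] = \sum_{(I,J,K,L)\in\D_p(n)^4} \E[W_I W_J W_K W_L] \]
and use the Hoeffding degeneracy property $\E[W_I\mid\F_A] = 0$ (for $I\not\subseteq A$) to discard every quadruple containing a \emph{free} index, i.e., an index contained in exactly one of the four sets. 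The orthogonality $\E[W_I W_J] = \delta_{I,J}\E[W_I^2]$ for distinct $p$-sets (a consequence of the same degeneracy) then allows me to classify the surviving quadruples by their equality pattern: all four sets equal, two pairs of equal sets (three sub-patterns), exactly one pair of equal sets, or all four distinct. Within the all-distinct class, the ``bifold'' sub-class (every index in exactly two sets, and exactly four of the six possible pair-labels used) splits, via a commutativity-and-relabeling bijection, into three symmetric sub-classes, each contributing exactly $S_0(W,W)$ to the total.

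Next, I would combine the all-equal and two-pair contributions with $3\sigma_{n,W}^4 = 3\sum_{I,K}\E[W_I^2]\E[W_K^2]$ and exploit the independence identity $\E[W_I^2 W_K^2] = \E[W_I^2]\E[W_K^2]$ (valid whenever $I\cap K=\emptyset$), to obtain an identity of the form
\[ \E[W^4] - 3\sigma_{n,W}^4 = \sum_I\kappa_4(W_I) + 3\!\!\!\sum_{\substack{I\neq K\\ I\cap K\neq\emptyset}}\!\!\!\Cov(W_I^2, W_K^2) + 3\,S_0(W,W) + E, \]
where $E$ collects the contribution from the one-pair class together with the all-distinct contributions from quadruples that are either non-bifold or use five or six pair-labels. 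The next step is to bound each error term by a multiple of $\sigma_{n,W}^2\rho_{n,W}^2$: Jensen's inequality $\E[W_I^4]\geq\E[W_I^2]^2$ combined with the elementary bound $\max_I\E[W_I^2]\leq\rho_{n,W}^2$ yields $\sum_I\kappa_4(W_I)\geq-2\sigma_{n,W}^2\rho_{n,W}^2$; the covariance sum and the remainder $E$ are controlled via Cauchy--Schwarz together with the combinatorial identity $\sum_i\sum_{I\ni i}\E[W_I^2] = p\sigma_{n,W}^2$, which upon combining with $\max_i\sum_{I\ni i}\E[W_I^2]\leq\rho_{n,W}^2$ produces estimates such as
\[ \sum_{\substack{I\neq K\\ I\cap K\neq\emptyset}}\E[W_I^2]\E[W_K^2] \leq \sum_i\Big(\sum_{I\ni i}\E[W_I^2]\Big)^2 \leq p\,\sigma_{n,W}^2\rho_{n,W}^2. \]
Collecting all the constants, and absorbing the two extra copies of $S_0(W,W)$ via the companion Lemma \ref{s0lemma} applied with $V = W$ (which provides the necessary lower bound for $S_0(W,W)$), produces the desired coefficient $2p$.

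The main obstacle will be the combinatorial bookkeeping of the many intermediate contributions of possibly mixed signs --- in particular keeping track of the two ``extra'' copies of $S_0(W,W)$ arising from the symmetric bifold subtypes, the non-bifold and higher-label all-distinct contributions, and the overlap-covariance terms --- and then choosing each bound sharply enough to arrive at the precise final coefficient $2p$ rather than a larger constant coming from loose estimates.
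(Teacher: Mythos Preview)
Your plan departs substantially from the paper's argument, and it contains a genuine gap.

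The paper does not expand $\E[W^4]$ directly over quadruples. Instead it invokes an identity from \cite{DP17} which, specialized to $V=W$, reads
\[
\sum_{\substack{M\subseteq[n]:\\1\leq|M|\leq 2p-1}}\Var\bigl(U_M(W)\bigr)
\;=\;\E[W^4]-3\sigma_{n,W}^4 - S_0(W,W)
+2\!\!\sum_{\substack{I,K\in\D_p(n):\\ I\cap K\neq\emptyset}}\!\!\E[W_I^2]\,\E[W_K^2]\,.
\]
The left-hand side is a sum of variances and therefore \emph{nonnegative}; combined with the elementary estimate $\sum_{I\cap K\neq\emptyset}\E[W_I^2]\E[W_K^2]\leq p\,\sigma_{n,W}^2\rho_{n,W}^2$ (which you also record), this yields the lemma immediately with the exact constant $2p$. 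The entire content of the proof is this structural positivity.

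Your route instead produces the coefficient $3$ in front of $S_0(W,W)$ and leaves behind the terms $\sum_I\kappa_4(W_I)$, $3\sum_{I\cap K\neq\emptyset}\Cov(W_I^2,W_K^2)$ and the residual $E$. Two things then go wrong. First, absorbing the two surplus copies of $S_0(W,W)$ via Lemma~\ref{s0lemma} introduces the unspecified constant $C_{p,p}$, not $p$, so the claimed final coefficient $2p$ cannot be reached this way. Second, and more seriously, the assertion that the covariance sum and the remainder $E$ are ``controlled via Cauchy--Schwarz'' by a multiple of $\sigma_{n,W}^2\rho_{n,W}^2$ is not justified: $\Cov(W_I^2,W_K^2)$ and the one-pair / non-bifold contributions genuinely involve fourth moments of the individual summands $W_I$, and there is no reason they should admit bounds in terms of second-moment quantities alone. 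What actually happens is that all of these fourth-moment pieces \emph{combine} into the nonnegative quantity $\sum_{|M|\leq 2p-1}\Var(U_M(W))$; by splitting them apart you destroy precisely the positivity that makes the argument work. Without recovering that grouping, your plan does not yield the stated inequality.
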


\begin{proof}
This follows from the first inequality in equation (2.13) of \cite{DP17} by noticing that the left hand side of this inequality is nonnegative and by taking into account that $\Var(W)=\sigma_{n,W}^2$ is not necessarily equal to $1$, here.
\end{proof}

The next two lemmas will be of vital importance for proving convergence of finite dimensional distributions. Lemma \ref{varlemma2} is an extension of results and techniques provided in \cite[Section 3]{DP17}, whereas Lemma \ref{varlemma1} is taken directly from there.
\begin{lemma}[Lemma 2.10 of \cite{DP17}]\label{varlemma1}
There is a finite constant $\kappa_p$ only depending on $p$ such that
\begin{align*}
\sum_{\substack{M\subseteq[n]:\\ 1\leq|M|\leq 2p-1}}\Var\bigl(U_M(W)\bigr)\leq \E[W^4]-3\sigma_{n,W}^4 +\kappa_p \sigma_{n,W}^2\rho_{n,W}^2\,.
\end{align*}
\end{lemma}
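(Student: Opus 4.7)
The plan is to reduce the inequality to a lower bound on the top-level Hoeffding components of $W^2$. Since the components $U_M(W)$, $M\subseteq[n]$, of $W^2$ are mutually orthogonal in $L^2(\P)$ and $U_\emptyset(W)=\E[W^2]=\sigma_{n,W}^2$, one gets the Pythagorean identity
\[\E[W^4]-\sigma_{n,W}^4=\sum_{M\neq\emptyset}\Var\bigl(U_M(W)\bigr).\]
Hence the claim is equivalent to the reverse estimate
\[\sum_{|M|=2p}\Var\bigl(U_M(W)\bigr)\geq 2\sigma_{n,W}^4-\kappa_p\sigma_{n,W}^2\rho_{n,W}^2,\]
and I would focus exclusively on proving this lower bound on the top layer.

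For $|M|=2p$, using the degeneracy of each $W_J$ and the independence of the $\sigma$-fields $\F_J,\F_K$ whenever $J\cap K=\emptyset$, one verifies that the only products $W_JW_K$ contributing to level $2p$ are those indexed by disjoint $p$-subsets of $M$, so
\[U_M(W)=\sum_{\substack{(J,K)\in\D_p(n)^2:\\ J\sqcup K=M}}W_JW_K.\]
Expanding $\Var(U_M(W))=\E[U_M(W)^2]$ and summing over $|M|=2p$ gives a quadruple sum in $(J,K,J',K')$ that I split into a \emph{diagonal} part, where $(J',K')\in\{(J,K),(K,J)\}$, and an \emph{off-diagonal} part, where $\{J,K\}\neq\{J',K'\}$. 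On the diagonal, disjointness together with independence gives $\E[W_J^2W_K^2]=\E[W_J^2]\E[W_K^2]$, leading to
\[\mathrm{Diag}=2\sum_{\substack{(J,K)\in\D_p(n)^2:\\ J\cap K=\emptyset}}\E[W_J^2]\E[W_K^2]\geq 2\sigma_{n,W}^4-2p\sigma_{n,W}^2\rho_{n,W}^2,\]
where the final bound follows by completing the sum to $\sigma_{n,W}^4$ and estimating the missing pairs via $\sum_{K:K\cap J\neq\emptyset}\E[W_K^2]\leq p\rho_{n,W}^2$, then summing against $\sum_J \E[W_J^2]=\sigma_{n,W}^2$.

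For the off-diagonal part, the constraint $\{J,K\}\neq\{J',K'\}$ together with $J\sqcup K=J'\sqcup K'=M$ and $|J|=|K|=|J'|=|K'|=p$ forces all four cross-intersections $J\cap J'$, $J\cap K'$, $K\cap J'$, $K\cap K'$ to be nonempty proper subsets of the sets they sit in. This is precisely the defining property of $\mathcal{S}_1(n,n,p)$, so $\mathrm{Off}=S_1(W,W)=S_0(W,W)$. Invoking Lemma \ref{s0lemma} with $V=W$ yields $S_0(W,W)\geq-C_p\sigma_{n,W}^2\rho_{n,W}^2$, and adding the two bounds gives the claim with $\kappa_p=2p+C_p$. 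The main obstacle in the argument is the clean combinatorial identification of the off-diagonal quadruple sum with $S_1(W,W)$; once this identification is in place, Lemma \ref{s0lemma} absorbs the rest of the combinatorics and produces the required error term of order $\sigma_{n,W}^2\rho_{n,W}^2$.
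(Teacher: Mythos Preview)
Your argument is correct. The paper does not supply its own proof of this lemma---it is imported verbatim from \cite{DP17}---so there is no in-paper proof to compare against line by line. That said, your route is precisely the one underlying the reference and is fully consistent with the computations the paper does carry out for the closely related Lemma~\ref{varlemma2}: there too one isolates the top Hoeffding layer $|M|=2p$, identifies the diagonal contribution with $2\sum_{J\cap K=\emptyset}\E[W_J^2]\E[W_K^2]$, recognizes the off-diagonal quadruples as exactly $\mathcal{S}_1$ (hence $S_1(W,W)=S_0(W,W)$), and then controls $S_0$ from below via Lemma~\ref{s0lemma}. Your identity
\[
\sum_{1\leq|M|\leq 2p-1}\Var\bigl(U_M(W)\bigr)=\E[W^4]-3\sigma_{n,W}^4+2\!\!\sum_{J\cap K\neq\emptyset}\!\!\E[W_J^2]\E[W_K^2]-S_0(W,W)
\]
makes the provenance of the constant transparent ($\kappa_p=2p+C_{p,p}$) and, read in the other direction with the nonnegativity of the left-hand side, also recovers Lemma~\ref{s0lemma2}. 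The combinatorial identification $\mathrm{Off}=S_1(W,W)$ is indeed the only nontrivial step, and your verification that conditions (ii)--(v) of $\mathcal{S}_1$ are equivalent (under $I\sqcup J=K\sqcup L$, $|I|=\cdots=|L|=p$) to $\{I,J\}\neq\{K,L\}$ is clean and matches the ``no free index / bifold'' reasoning used elsewhere in the paper.
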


\begin{lemma}\label{varlemma2}
 \begin{enumerate}[{\normalfont (i)}]
  \item If $p=q$, then 
  \begin{align*}
 &  \sum_{\substack{M\subseteq[n\vee m]:\\ |M|\leq 2p-1}} \Var\bigl(U_M(V,W)\bigr)\leq p\min\bigl(\rho_{n,W}^2\sigma_{m,V}^2,\,\rho_{m,V}^2\sigma_{n,W}^2\bigr)+ p\rho_{m\wedge n,V}\rho_{m\wedge n,W}\sigma_{m\wedge n,V}\sigma_{m\wedge n,W}\\
 &\;\hspace{1.5cm}+\biggl(\sum_{\substack{M\subseteq[m\wedge n]:\\ 1\leq|M|\leq 2p-1}}\Var\bigl(U_M(V)\bigr)\biggr)^{1/2}\cdot \biggl(\sum_{\substack{M\subseteq[m\wedge n]:\\ 1\leq|M|\leq 2p-1}}\Var\bigl(U_M(W)\bigr)\biggr)^{1/2}+S_1(V,W)-S_0(V,W)\,.
  \end{align*}
\item If $p\not=q$, then 
\begin{align*}
 &\sum_{\substack{M\subseteq[n\vee m]:\\ |M|\leq p+q-1}} \Var\bigl(U_M(V,W)\bigr)\leq\min\bigl(q\rho_{n,W}^2\sigma_{m,V}^2,\,p\rho_{m,V}^2\sigma_{n,W}^2\bigr)- S_0(V,W)\\
&\;\hspace{4cm}+ \biggl(\sum_{\substack{M\subseteq[m\wedge n]:\\ 1\leq|M|\leq 2(p\vee q)-1}}\Var\bigl(U_M(V)\bigr)\biggr)^{1/2} \cdot \biggl(\sum_{\substack{M\subseteq[m\wedge n]:\\ 1\leq|M|\leq 2(p\vee q)-1}}\Var\bigl(U_M(W)\bigr)\biggr)^{1/2}\,.
\end{align*}
 \end{enumerate}

\end{lemma}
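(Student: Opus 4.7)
The strategy is to exploit orthogonality of the Hoeffding components of $VW$, yielding
\[\sum_{1 \leq |M| \leq p+q-1}\Var\bigl(U_M(V,W)\bigr) \;=\; \Var(VW) - T,\qquad T := \sum_{|M|=p+q}\Var\bigl(U_M(V,W)\bigr),\]
and then expanding both $\Var(VW) = \E[V^2W^2] - \E[VW]^2$ and $T$ as sums over quadruples $(I,J,K,L)\in\D_q(m)^2\times\D_p(n)^2$ of expectations $\E[V_IV_JW_KW_L]$, matching the resulting combinatorial contributions to the terms on the right-hand side of the claim.

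For $\E[V^2W^2]$ I would invoke the Hoeffding decompositions $V^2=\sum_M U_M(V)$ and $W^2=\sum_N U_N(W)$ together with the orthogonality $\E[U_M(V)U_N(W)]=0$ unless $M=N\subseteq[m\wedge n]$, a fact that follows by conditioning on $\F_{[m\vee n]\setminus\{j\}}$ for any $j\in M\triangle N$ and using degeneracy. Splitting off $M=\emptyset$ contributes $\sigma_{m,V}^2\sigma_{n,W}^2$, and the remaining cross sum is bounded by Cauchy-Schwarz to yield the $\sqrt{\,\cdot\,}\sqrt{\,\cdot\,}$ term in the statement (with the index ranges as asserted, since the $|M|=2(p\wedge q)$ component gets absorbed into $T$ in the $p=q$ case, and is simply outside the natural range when $p\not=q$). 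For $T$, I would first establish
\[U_M(V,W)\;=\;\sum_{\substack{I\sqcup K=M\\ I\in\D_q(m),\,K\in\D_p(n)}}V_IW_K\qquad\text{for }|M|=p+q,\]
which holds because for disjoint $I,K$ the product $V_IW_K$ is itself a pure Hoeffding component of order $p+q$ (the conditional expectation of $V_IW_K$ given $\F_J$ factors as $V_IW_K\mathbf{1}_{I\cup K\subseteq J}$ by independence, so inclusion-exclusion kills all lower orders), while products with $|I\cup K|<p+q$ contribute nothing at this order. Squaring decomposes $T$ into a diagonal piece $\sum_{I\cap K=\emptyset}\E[V_I^2]\E[W_K^2]$ plus a cross piece matching the $\mathcal{S}_0$-type 4-cycle bifold quadruples, together with a handful of corner cases where one of the pairing sizes vanishes.

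For case (ii) ($p\not=q$, so $\E[VW]=0$ by degeneracy), combining these ingredients gives
\[\Var(VW)-T \;=\; \sum_{I\cap K\not=\emptyset}\E[V_I^2]\E[W_K^2]\;+\;\sum_{M\not=\emptyset}\E[U_M(V)U_M(W)]\;-\;S_0(V,W)\;-\;\text{(corner)},\]
where the first sum is controlled by a union bound on shared indices $j\in I\cap K$, producing $\min(q\rho_{n,W}^2\sigma_{m,V}^2,\,p\rho_{m,V}^2\sigma_{n,W}^2)$; the second is bounded by the above Cauchy-Schwarz estimate; and the corner contributions (top-cross quadruples not in $\mathcal{S}_0$, e.g. $I\cap J=\emptyset$) are shown, after sign analysis, to be absorbed into the first two bounds.

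For case (i) ($p=q$) one additionally has $\E[VW]^2=\bigl(\sum_I\E[V_IW_I]\bigr)^2$, which must be paired with the top-order $|M|=2p$ slice of $\sum_M\E[U_M(V)U_M(W)]$: the diagonal of that slice, using independence for disjoint $I,J$, equals $\sum_{I\sqcup J=M}\E[V_IW_I]\E[V_JW_J]$, so subtracting from $\E[VW]^2$ leaves $\sum_{I\cap J\not=\emptyset}\E[V_IW_I]\E[V_JW_J]$, whose absolute value is bounded by $p\,\rho_{m\wedge n,V}\rho_{m\wedge n,W}\sigma_{m\wedge n,V}\sigma_{m\wedge n,W}$ via a union bound over $j\in I\cap J$ together with two applications of Cauchy-Schwarz (first $|\E[V_IW_I]|\le\sqrt{\E[V_I^2]\E[W_I^2]}$, then on the $I\ni j$ sum); the off-diagonal portion of that top slice is precisely the sum over $\mathcal{S}_1$-type 4-cycle bifold quadruples, yielding the $S_1(V,W)-S_0(V,W)$ combination. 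The main obstacle is the combinatorial bookkeeping in case (i), where both $\mathcal{S}_0$- and $\mathcal{S}_1$-type quadruples appear simultaneously and must be disentangled from the degenerate corner cases with the correct signs; once set up, the argument is a direct generalization of Lemma 2.10 of \cite{DP17} (the single-kernel case $V=W$) to two distinct kernels of possibly different orders.
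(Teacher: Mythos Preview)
Your proposal is correct and follows essentially the same route as the paper: start from $\sum_{|M|\leq p+q-1}\Var(U_M(V,W))=\Var(VW)-\sum_{|M|=p+q}\Var(U_M(V,W))$, expand the top-order piece via $U_M(V,W)=\sum_{I\sqcup K=M}V_IW_K$, handle $\E[V^2W^2]$ through the orthogonality of the Hoeffding components of $V^2$ and $W^2$, and identify the bifold quadruple sums $S_0$ and (when $p=q$) $S_1$ together with the $\rho\sigma$-type remainder bounds. One small clarification: in case~(ii) the ``corner contributions'' you mention (e.g.\ $I\cap J=\emptyset$ among the top-order quadruples) are in fact empty when $p\neq q$, since $I\cap K=J\cap L=\emptyset$ and bifoldness then force $I\subseteq L$, which is impossible for $|I|=q\neq p=|L|$; so no sign analysis or absorption is needed there.
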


\begin{proof}
 We begin with some computations that are valid in all different cases. 
 \begin{align*}
  &\sum_{\substack{M\subseteq[n\vee m]:\\ |M|\leq p+q-1}} \Var\bigl(U_M(V,W)\bigr)=\Var(VW)-\sum_{\substack{M\subseteq[n\vee m]:\\ |M|= p+q}} \Var\bigl(U_M(V,W)\bigr)\\
&=\Var(VW)-\sum_{\substack{M\subseteq[n\vee m]:\\ |M|= p+q}}\Var\Bigl(\sum_{\substack{J\in\D_q(m), K\in \D_p(n):\\ J\cup K=M}}V_J W_K\Bigr)\\
&=\Var(VW)-\sum_{\substack{M\subseteq[n\vee m]:\\ |M|= p+q}}\sum_{\substack{I,J\in\D_q(m),\\ K,L\in\D_p(n):\\ I\cap K= J\cap L=\emptyset,\\I\cup K=M=J\cup L}}\E\bigl[V_I V_J W_K W_L\bigr]\\
&=\Var(VW)-\sum_{\substack{I,J\in\D_q(m),\\ K,L\in\D_p(n):\\ I\cap K= J\cap L=\emptyset}}\E\bigl[V_I V_J W_K W_L\bigr]\\
&=\Var(VW)-\sum_{\substack{I\in\D_q(m),\\ K\in\D_p(n):\\ I\cap K=\emptyset}}\E\bigl[V_I^2\bigr]\E\bigl[W_K^2\bigr]-
\sum_{\substack{I,J\in\D_q(m),\\ K,L\in\D_p(n):\\ I\cap K= J\cap L=\emptyset,\\ \emptyset\not=I\cap J=I\setminus(I\cap L)\not=I,\\ \emptyset\not=J\cap I=J\setminus(J\cap K)\not=J}} \E\bigl[V_IV_JW_KW_L\bigr]\\
&\hspace{3cm}-\delta_{p,q}\sum _{\substack{I,K\in\D_q(m\wedge n):\\ I\cap K=\emptyset}}\E\bigl[V_IW_I\bigr]\E\bigl[V_KW_K\bigr]
 \end{align*}
Noting that 
\begin{align*}
 \Var(VW)&=\Cov(V^2,W^2)+\E[V^2]\E[W^2]-\E[VW]^2
\end{align*}
and also, for reasons of degeneracy, that 
\begin{align*}
 \sum_{\substack{I,J\in\D_q(m),\\ K,L\in\D_p(n):\\ I\cap K= J\cap L=\emptyset,\\ \emptyset\not=I\cap J=I\setminus(I\cap L)\not=I,\\ \emptyset\not=J\cap I=J\setminus(J\cap K)\not=J}} \E\bigl[V_IV_JW_KW_L\bigr]&=S_0(V,W)
\end{align*}
we hence obtain that 
\begin{align}\label{vl1}
 &\sum_{\substack{M\subseteq[n\vee m]:\\ |M|\leq p+q-1}} \Var\bigl(U_M(V,W)\bigr)
=\Cov(V^2,W^2)+\E[V^2]\E[W^2]-\E[VW]^2 \notag\\
&\;\hspace{2cm}-\sum_{\substack{I\in\D_q(m),\\ K\in\D_p(n):\\ I\cap K=\emptyset}}\E\bigl[V_I^2\bigr]\E\bigl[W_K^2\bigr]-S_0(V,W)-\delta_{p,q}\sum _{\substack{I\in\D_q(m),\\ K\in\D_p(n):\\ I\cap K=\emptyset}}\E\bigl[V_IW_I\bigr]\E\bigl[V_KW_K\bigr]\,.
\end{align}

We now deal with the individual terms appearing on the right hand side of \eqref{vl1}.
\begin{align*}
 \sum_{\substack{I\in\D_q(m),\\ K\in\D_p(n):\\ I\cap K=\emptyset}}\E\bigl[V_I^2\bigr]\E\bigl[W_K^2\bigr]&=\sum_{\substack{I\in\D_q(m),\\ K\in\D_p(n)}}\E\bigl[V_I^2\bigr]\E\bigl[W_K^2\bigr]
 -\sum_{\substack{I\in\D_q(m),\\ K\in\D_p(n):\\ I\cap K\not=\emptyset}}\E\bigl[V_I^2\bigr]\E\bigl[W_K^2\bigr]\\
 &=\E[V^2]\E[W^2]-\sum_{\substack{I\in\D_q(m),\\ K\in\D_p(n):\\ I\cap K\not=\emptyset}}\E\bigl[V_I^2\bigr]\E\bigl[W_K^2\bigr]\,.
\end{align*}
Hence,
\begin{align*}
 &\sum_{\substack{M\subseteq[n\vee m]:\\ |M|\leq p+q-1}} \Var\bigl(U_M(V,W)\bigr)= \Cov(V^2,W^2)-\E[VW]^2\notag\\
 &\hspace{2cm}+\sum_{\substack{I\in\D_q(m),\\ K\in\D_p(n):\\ I\cap K\not=\emptyset}}\E\bigl[V_I^2\bigr]\E\bigl[W_K^2\bigr]
 -S_0(V,W)-\delta_{p,q}\sum _{\substack{I,K\in\D_q(m\wedge n):\\ I\cap K=\emptyset}}\E\bigl[V_IW_I\bigr]\E\bigl[V_KW_K\bigr]\,.
\end{align*}
Moreover, if $p=q$ then 
\begin{align*}
&\sum _{\substack{I,K\in\D_p(m\wedge n):\\ I\cap K=\emptyset}}\E\bigl[V_IW_I\bigr]\E\bigl[V_KW_K\bigr]\\
&= \biggl(\sum_{I\in\D_p(m\wedge n)}\E\bigl[V_IW_I\bigr]\biggr)^2-\sum _{\substack{I,K\in\D_p(m\wedge n):\\ I\cap K\not=\emptyset}}\E\bigl[V_IW_I\bigr]\E\bigl[V_KW_K\bigr]\\
&=\E[VW]^2-\sum _{\substack{I,K\in\D_p(m\wedge n):\\ I\cap K\not=\emptyset}}\E\bigl[V_IW_I\bigr]\E\bigl[V_KW_K\bigr]\,.
\end{align*}
Hence, using that $\E[VW]=0$ if $p\not=q$, we obtain that 
\begin{align}\label{vl8}
 &\sum_{\substack{M\subseteq[n\vee m]:\\ |M|\leq p+q-1}} \Var\bigl(U_M(V,W)\bigr)= \Cov(V^2,W^2)-2\delta_{p,q}\E[VW]^2\notag\\
 &\hspace{1cm}+\sum_{\substack{I\in\D_q(m),\\ K\in\D_p(n):\\ I\cap K\not=\emptyset}}\E\bigl[V_I^2\bigr]\E\bigl[W_K^2\bigr] 
  -S_0(V,W)+\delta_{p,q}\sum _{\substack{I,K\in\D_p(m\wedge n):\\ I\cap K\not=\emptyset}}\E\bigl[V_IW_I\bigr]\E\bigl[V_KW_K\bigr]\,.
\end{align}

In particular, if $p\not=q$, then 
\begin{align}\label{pnotq}
  &\sum_{\substack{M\subseteq[n\vee m]:\\ |M|\leq p+q-1}} \Var\bigl(U_M(V,W)\bigr)= \Cov(V^2,W^2)+\sum_{\substack{I\in\D_q(m),\\ K\in\D_p(n):\\ I\cap K\not=\emptyset}}\E\bigl[V_I^2\bigr]\E\bigl[W_K^2\bigr]
 -S_0(V,W)\,.
\end{align}

Now, 
\begin{align*}
 \sum_{\substack{I\in\D_q(m),\\ K\in\D_p(n):\\ I\cap K\not=\emptyset}}\E\bigl[V_I^2\bigr]\E\bigl[W_K^2\bigr]&=\sum_{I\in\D_q(m)}\E[V_I^2]\sum_{i\in I}\sum_{\substack{K\in\D_p(n):\\\max(K\cap I)=i}}\E[W_K^2]\\
 &\leq q\rho_{n,W}^2\sum_{I\in\D_q(m)}\E[V_I^2]=q\rho_{n,W}^2\E[V]^2=q\rho_{n,W}^2\sigma_{m,V}^2\,,
\end{align*}
and, analogously, 
\begin{align*}
 \sum_{\substack{I\in\D_q(m),\\ K\in\D_p(n):\\ I\cap K\not=\emptyset}}\E\bigl[V_I^2\bigr]\E\bigl[W_K^2\bigr]&\leq p\rho_{m,V}^2\E[W]^2=p\rho_{m,V}^2\sigma_{n,W}^2\,.
\end{align*}
Hence, 
\begin{align}\label{vl2}
 \sum_{\substack{I\in\D_q(m),\\ K\in\D_p(n):\\ I\cap K\not=\emptyset}}\E\bigl[V_I^2\bigr]\E\bigl[W_K^2\bigr]&\leq\min\bigl(q\rho_{n,W}^2\sigma_{m,V}^2,\,p\rho_{m,V}^2\sigma_{n,W}^2\bigr)\,.
\end{align}

We have 
\begin{align*}
 &\Babs{\sum _{\substack{I,K\in\D_p(m\wedge n):\\ I\cap K\not=\emptyset}}\E\bigl[V_IW_I\bigr]\E\bigl[V_KW_K\bigr]}\leq \sum_{I\in\D_p(m\wedge n)}\babs{\E\bigl[V_IW_I\bigr]}
 \sum_{\substack{K\in\D_p(m\wedge n):\\ I\cap K\not=\emptyset}}\babs{\E\bigl[V_KW_K\bigr]}\\
 &=\sum_{I\in\D_p(m\wedge n)}\babs{\E\bigl[V_IW_I\bigr]}\sum_{i\in I}\sum_{\substack{K\in\D_p(m\wedge n):\\ \max(I\cap K)=i}}\babs{\E\bigl[V_KW_K\bigr]}\\
 &\leq \sum_{I\in\D_p(m\wedge n)}\babs{\E\bigl[V_IW_I\bigr]}\sum_{i\in I}\Bigl(\sum_{\substack{K\in\D_p(m\wedge n):\\ \max(I\cap K)=i}}\E[V_K^2]\Bigr)^{1/2}\cdot \Bigl(\sum_{\substack{K\in\D_p(m\wedge n):\\ \max(I\cap K)=i}}\E[W_K^2]\Bigr)^{1/2}\\
 &\leq p\rho_{m\wedge n,V}\rho_{m\wedge n,W}\sum_{I\in\D_p(m\wedge n)}\babs{\E\bigl[V_IW_I\bigr]}\\
 &\leq  p\rho_{m\wedge n,V}\rho_{m\wedge n,W}\Bigl(\sum_{I\in\D_p(m\wedge n)}\E[V_I^2]\Bigr)^{1/2}\cdot \Bigl(\sum_{I\in\D_p(m\wedge n)}\E[W_I^2]\Bigr)^{1/2}\\
 &=p\rho_{m\wedge n,V}\rho_{m\wedge n,W}\sigma_{m\wedge n,V}\sigma_{m\wedge n,W}\,.
\end{align*}
Hence, we obtain the inequalities
\begin{align}\label{vl3}
 -p\rho_{m\wedge n,V}\rho_{m\wedge n,W}\sigma_{m\wedge n,V}\sigma_{m\wedge n,W}&\leq \sum _{\substack{I\in\D_p(m),\\ K\in\D_p(n):\\ I\cap K\not=\emptyset}}\E\bigl[V_IW_I\bigr]\E\bigl[V_KW_K\bigr]\notag\\
& \leq p\rho_{m\wedge n,V}\rho_{m\wedge n,W}\sigma_{m\wedge n,V}\sigma_{m\wedge n,W}
\end{align}

We estimate the term $\Cov(V^2,W^2)$ seperately according to whether $p=q$ or not. First assume that $p=q$. Then, due to the orthogonality of Hoeffding components we have 
\begin{align*}
 \Cov(V^2,W^2)&=\sum_{\substack{M\subseteq[m],\\ N\subseteq[n]:\\ 1\leq|M|,|N|\leq 2p}}\E\bigl[U_M(V) U_N(W)\bigr]=\sum_{\substack{M\subseteq[m\wedge n]:\\ 1\leq|M|\leq 2p}}\E\bigl[U_M(V) U_M(W)\bigr]\\
 &=\sum_{\substack{M\subseteq[m\wedge n]:\\ 1\leq|M|\leq 2p-1}}\E\bigl[U_M(V) U_M(W)\bigr]+\sum_{\substack{M\subseteq[m\wedge n]:\\ |M|=2p}}\E\bigl[U_M(V) U_M(W)\bigr]\\
 &=:T_1+T_2\,.
\end{align*}
Using the Cauchy-Schwarz inequality twice we can estimate
\begin{align*}
 \abs{T_1}&\leq \sum_{\substack{M\subseteq[m\wedge n]:\\ 1\leq|M|\leq 2p-1}}\sqrt{\Var\bigl(U_M(V)\bigr)}\sqrt{\Var\bigl(U_M(W)\bigr)}\\
& \leq\biggl(\sum_{\substack{M\subseteq[m\wedge n]:\\ 1\leq|M|\leq 2p-1}}\Var\bigl(U_M(V)\bigr)\biggr)^{1/2}\cdot \biggl(\sum_{\substack{M\subseteq[m\wedge n]:\\ 1\leq|M|\leq 2p-1}}\Var\bigl(U_M(W)\bigr)\biggr)^{1/2}\,.
\end{align*}
On the other hand, we have 
\begin{align*}
 T_2&=\sum_{\substack{M\subseteq[m\wedge n]:\\ |M|=2p}}\sum_{\substack{I,J\in\D_p(m),\\ K,L\in\D_p(n):\\ I\cup J=K\cup L=M}}\E\bigl[V_IV_JW_KW_L\bigr]
 =\sum_{\substack{I,J\in\D_p(m),\\ K,L\in\D_p(n):\\ I\cap J=K\cap L=\emptyset,\\I\cup J=K\cup L }}\E\bigl[V_IV_JW_KW_L\bigr]\\
 &=\sum_{\substack{I,J\in\D_p(m),\\ K,L\in\D_p(n):\\ I\cap J=K\cap L=\emptyset}}\E\bigl[V_IV_JW_KW_L\bigr] \\
 &=2\sum _{\substack{I,J\in\D_p(m\wedge n):\\ I\cap J=\emptyset}}\E\bigl[V_IW_I\bigr]\E\bigl[V_JW_J\bigr] 
 + \sum_{\substack{I,J\in\D_p(m),\\ K,L\in\D_p(n):\\ I\cap J= K\cap L=\emptyset,\\ \emptyset\not=I\cap K=I\setminus(I\cap L)\not=I}} \E\bigl[V_IV_JW_KW_L\bigr]\\
& =2\sum _{\substack{I,J\in\D_p(m\wedge n):\\ I\cap J=\emptyset}}\E\bigl[V_IW_I\bigr]\E\bigl[V_JW_J\bigr] +S_1(V,W)\\
 &=2\E[VW]^2-2 \sum _{\substack{I,J\in\D_p(m\wedge n):\\ I\cap J\not=\emptyset}}\E\bigl[V_IW_I\bigr]\E\bigl[V_JW_J\bigr] +S_1(V,W)    \,,
\end{align*}
where we have used that, due to degeneracy,
\begin{align*}
&\sum_{\substack{I,J\in\D_p(m),\\ K,L\in\D_p(n):\\ I\cap J= K\cap L=\emptyset,\\ \emptyset\not=I\cap K=I\setminus(I\cap L)\not=I}} \E\bigl[V_IV_JW_KW_L\bigr]
=\sum_{\substack{I,J,K,L\in\D_p(m\wedge n):\\ I\cap J= K\cap L=\emptyset,\\ \emptyset\not=I\cap K=I\setminus(I\cap L)\not=I}} \E\bigl[V_IV_JW_KW_L\bigr]\\
&=\sum_{\substack{I,J,K,L\in\D_p(m\wedge n):\\ I\cap J= K\cap L=\emptyset,\\ \emptyset\not=I\cap K=I\setminus(I\cap L)\not=I,\\ \emptyset\not=J\cap L=J\setminus(J\cap K)\not=J}} \E\bigl[V_IV_JW_KW_L\bigr]=S_1(V,W)\,.
\end{align*}
Hence, if $p=q$ then we obtain that
\begin{align}\label{vl4}
 \Cov(V^2,W^2)
&\leq\biggl(\sum_{\substack{M\subseteq[m\wedge n]:\\ 1\leq|M|\leq 2p-1}}\Var\bigl(U_M(V)\bigr)\biggr)^{1/2}\cdot \biggl(\sum_{\substack{M\subseteq[m\wedge n]:\\ 1\leq|M|\leq 2p-1}}\Var\bigl(U_M(W)\bigr)\biggr)^{1/2}\notag\\
&\;+2\E[VW]^2-2 \sum _{\substack{I,K\in\D_p(m\wedge n):\\ I\cap K\not=\emptyset}}\E\bigl[V_IW_I\bigr]\E\bigl[V_KW_K\bigr] +S_1(V,W)\,.
\end{align}
Altogether, in the case $p=q$, we obtain from \eqref{vl8}-\eqref{vl4} that 
\begin{align*}
 &\sum_{\substack{M\subseteq[n\vee m]:\\ |M|\leq p+q-1}} \Var\bigl(U_M(V,W)\bigr)\leq p\min\bigl(\rho_{n,W}^2\sigma_{m,V}^2,\,\rho_{m,V}^2\sigma_{n,W}^2\bigr) \\
&\;+ p\rho_{m\wedge n,V}\rho_{m\wedge n,W}\sigma_{m\wedge n,V}\sigma_{m\wedge n,W} +S_1(V,W)-S_0(V,W)\\
 &\;+\biggl(\sum_{\substack{M\subseteq[m\wedge n]:\\ 1\leq|M|\leq 2p-1}}\Var\bigl(U_M(V)\bigr)\biggr)^{1/2}\cdot \biggl(\sum_{\substack{M\subseteq[m\wedge n]:\\ 1\leq|M|\leq 2p-1}}\Var\bigl(U_M(W)\bigr)\biggr)^{1/2}\,,
\end{align*}
proving part (i). Let us now assume that $p\not=q$. Let $r:=p\wedge q< p\vee q=:s$. In this case, clearly, $\E[VW]=0$. Moreover, similarly as before, we have 
\begin{align}\label{vl5}
 \Cov(V^2,W^2)&=\sum_{\substack{M\subseteq[m],\, N\subseteq[n]:\\ 1\leq|M|\leq 2q,\\1\leq |N|\leq 2p}}\E\bigl[U_M(V) U_N(W)\bigr]=\sum_{\substack{M\subseteq[m\wedge n]:\\ 1\leq|M|\leq 2r}}\E\bigl[U_M(V) U_M(W)\bigr]\notag\\
 &=\sum_{\substack{M\subseteq[m\wedge n]:\\ 1\leq|M|\leq 2s-1}}\E\bigl[U_M(V) U_M(W)\bigr]\notag\\
 &\leq \sum_{\substack{M\subseteq[m\wedge n]:\\ 1\leq|M|\leq 2s-1}}\sqrt{\Var\bigl(U_M(V)\bigr)}\sqrt{\Var\bigl(U_M(W)\bigr)}\notag\\
 &\leq \biggl(\sum_{\substack{M\subseteq[m\wedge n]:\\ 1\leq|M|\leq 2s-1}}\Var\bigl(U_M(V)\bigr)\biggr)^{1/2} \cdot \biggl(\sum_{\substack{M\subseteq[m\wedge n]:\\ 1\leq|M|\leq 2s-1}}\Var\bigl(U_M(W)\bigr)\biggr)^{1/2}\,.
 \end{align}
This finishes the proof of (ii).
\end{proof}

\begin{corollary}\label{varcor1}
In the situation of Lemma \ref{varlemma2} (i) assume that, additionally, $m\leq n$ and $V_J=W_J$ hold for all $J\in\D_p(m)$. Then, we have 
\begin{align*}
 &  \sum_{\substack{M\subseteq[n\vee m]:\\ |M|\leq 2p-1}} \Var\bigl(U_M(V,W)\bigr)\leq \E[V^4]-3\sigma_{m,V}^4+2p\sigma_{m,V}^2\rho_{m,V}^2 +\bigl(C_{p,q}+2p \bigr)\rho_{n,W}^2\sigma_{n,W}^2\\
&\;+\biggl(\E[V^4]-3\sigma_{m,V}^4+\kappa_p\sigma_{m,V}^2\rho_{m,V}^2\biggr)^{1/2}\cdot \biggl(\E[W^4]-3\sigma_{n,W}^4+\kappa_p\sigma_{n,W}^2\rho_{n,W}^2\biggr)^{1/2}\,
\end{align*}
where $\kappa_p$ is a finite constant depending only on $p$.
\end{corollary}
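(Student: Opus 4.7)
The plan is to derive Corollary~\ref{varcor1} as a direct specialization of Lemma~\ref{varlemma2}(i), bounding each term on its right-hand side using the earlier lemmas of this section. The key observation is that, under the extra hypotheses ($p=q$, $m\le n$ and $V_J=W_J$ for all $J\in\D_p(m)$), the Hoeffding components of $V$ and $W$ agree on $\D_p(m\wedge n)=\D_p(m)$, so that
\[
\rho_{m,V}^2=\rho_{m,W}^2,\qquad \sigma_{m,V}^2=\sigma_{m,W}^2,\qquad S_1(V,W)=S_0(V,V),
\]
while \eqref{ineqrhosigma} yields $\rho_{m,V}^2\le\rho_{n,W}^2$ and $\sigma_{m,V}^2\le\sigma_{n,W}^2$. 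These identifications let me absorb every quantity bearing a subscript $m$ or a $V$ into the corresponding quantity with subscript $n$ and $W$, at the price of harmless constants.

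With this in place I estimate each of the four contributions in Lemma~\ref{varlemma2}(i) separately. Both $p\min(\rho_{n,W}^2\sigma_{m,V}^2,\rho_{m,V}^2\sigma_{n,W}^2)$ and $p\,\rho_{m\wedge n,V}\rho_{m\wedge n,W}\sigma_{m\wedge n,V}\sigma_{m\wedge n,W}$, which under the hypotheses reduces to $p\,\rho_{m,V}^2\sigma_{m,V}^2$, are bounded by $p\,\rho_{n,W}^2\sigma_{n,W}^2$ using the monotonicities above. The remaining difference $S_1(V,W)-S_0(V,W)=S_0(V,V)-S_0(V,W)$ is then split in two: Lemma~\ref{s0lemma2}, applied to $V$ in place of $W$ (both of order $p$), produces the piece $S_0(V,V)\le \E[V^4]-3\sigma_{m,V}^4+2p\sigma_{m,V}^2\rho_{m,V}^2$, while Lemma~\ref{s0lemma} yields $-S_0(V,W)\le C_{p,q}\max(\sigma_{m,V}^2\rho_{n,W}^2,\sigma_{n,W}^2\rho_{m,V}^2)\le C_{p,q}\rho_{n,W}^2\sigma_{n,W}^2$. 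Summing the three contributions of the form $(\,\cdot\,)\,\rho_{n,W}^2\sigma_{n,W}^2$ reproduces exactly the coefficient $C_{p,q}+2p$ in the statement.

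Finally, the Cauchy--Schwarz cross term of Lemma~\ref{varlemma2}(i) is handled by applying Lemma~\ref{varlemma1} to each of its two factors. For $V$ this is immediate, since $[m\wedge n]=[m]$ coincides with the index set underlying $V$. For $W$, the sum ranges only over $M\subseteq[m]\subseteq[n]$, but as each $\Var(U_M(W))$ is nonnegative, extending the summation to the full set $M\subseteq[n]$ only increases the total, so Lemma~\ref{varlemma1} applies verbatim with $n$ in place of $m$. I do not anticipate any serious analytic obstacle: the argument is essentially an exercise in careful bookkeeping, the only point requiring vigilance being the consistent identification $m\wedge n=m$ and the matching of $V$- versus $W$-quantities when reading off constants from Lemmas~\ref{s0lemma}, \ref{s0lemma2}, \ref{varlemma1} and \ref{varlemma2}.
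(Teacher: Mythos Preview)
Your proposal is correct and follows exactly the route indicated in the paper's one-line proof: reduce via $S_1(V,W)=S_0(V,V)$, then bound each term of Lemma~\ref{varlemma2}(i) using Lemmas~\ref{s0lemma}, \ref{s0lemma2}, \ref{varlemma1} and the monotonicities in~\eqref{ineqrhosigma}. Your bookkeeping of the constants (in particular the accumulation of the $(C_{p,q}+2p)$ coefficient and the justification for enlarging the $W$-sum from $M\subseteq[m]$ to $M\subseteq[n]$ before invoking Lemma~\ref{varlemma1}) is accurate.
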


\begin{proof}
Since $S_1(V,W)=S_0(V,V)$ in this case, the result follows immediately from Lemmas \ref{varlemma2} (i), \ref{varlemma1}, \ref{s0lemma}, \ref{s0lemma2} and \eqref{ineqrhosigma}.
\end{proof}

\begin{corollary}\label{varcor2}
If, in the situation of Lemma \ref{varlemma2} (ii), we additionally have $p<q$, then
\begin{align*}
 &\sum_{\substack{M\subseteq[n\vee m]:\\ |M|\leq p+q-1}} \Var\bigl(U_M(V,W)\bigr)\leq\bigl(C_{p,q}+q\bigr)\max\bigl(\sigma_{m,V}^2\rho_{n,W}^2,\sigma_{n,W}^2\rho_{m,V}^2\bigr)\\
&\;+\Bigl(\E[V^4]-3\sigma_{m,V}^4+\kappa_q\sigma_{m,V}^2\rho_{m,V}^2\Bigr)^{1/2}\cdot \Bigl(\E[W^4]-\sigma_{n,W}^4\Bigr)^{1/2}\,,
\end{align*}
where $\kappa_q$ is a finite constant depending only on $q$.
\end{corollary}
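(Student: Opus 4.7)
The proof is just a matter of assembling the bounds from Lemma \ref{varlemma2}(ii), Lemma \ref{s0lemma} and Lemma \ref{varlemma1}, tracking carefully where the asymmetry between $V$ (order $q$) and $W$ (order $p$) with $p<q$ enters. So my plan is to start from the conclusion of Lemma \ref{varlemma2}(ii), noting that since $p<q$ we have $p\vee q=q$, and bound each of the three summands appearing on its right-hand side separately.

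First, for the term $\min\bigl(q\rho_{n,W}^2\sigma_{m,V}^2,\,p\rho_{m,V}^2\sigma_{n,W}^2\bigr)$, I simply use the trivial bound $\min(\cdot,\cdot)\leq\max(\cdot,\cdot)$ together with $p<q$, so that this contributes at most $q\max\bigl(\sigma_{m,V}^2\rho_{n,W}^2,\sigma_{n,W}^2\rho_{m,V}^2\bigr)$. Next, for the $-S_0(V,W)$ term, Lemma \ref{s0lemma} gives $-S_0(V,W)\leq C_{p,q}\max\bigl(\sigma_{m,V}^2\rho_{n,W}^2,\sigma_{n,W}^2\rho_{m,V}^2\bigr)$. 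Adding these two yields the coefficient $(C_{p,q}+q)$ in front of the max, as required.

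Second, for the product of square roots involving $\sum\Var(U_M(V))$ and $\sum\Var(U_M(W))$, I treat the two factors differently. For $V$, which is of order $q$, I enlarge the summation domain from $M\subseteq[m\wedge n]$ to $M\subseteq[m]$ (legitimate because $\Var(U_M(V))\geq 0$) and invoke Lemma \ref{varlemma1}, which gives exactly $\E[V^4]-3\sigma_{m,V}^4+\kappa_q\sigma_{m,V}^2\rho_{m,V}^2$. For $W$, I exploit the fact that $W$ is a degenerate $U$-statistic of order $p$, so that $U_M(W)=0$ for $|M|>2p$; hence, enlarging the domain from $[m\wedge n]$ to $[n]$ and extending the range of $|M|$ up to $2p$, the sum becomes exactly $\Var(W^2)$, which equals $\E[W^4]-\sigma_{n,W}^4$ since $\E[W^2]=\sigma_{n,W}^2$. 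This is precisely the asymmetry that produces $-\sigma_{n,W}^4$ (rather than $-3\sigma_{n,W}^4$) in the stated bound.

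There is no real obstacle here; the only subtle bookkeeping point is recognizing that, because $p<q$, one can avoid invoking Lemma \ref{varlemma1} on the $W$-side, and instead use the elementary identity $\Var(W^2)=\E[W^4]-\sigma_{n,W}^4$ together with the vanishing of high-order Hoeffding components of $W^2$. Combining all of the above into the inequality of Lemma \ref{varlemma2}(ii) yields the claim.
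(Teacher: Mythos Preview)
Your proof is correct and follows essentially the same route as the paper's own argument: start from Lemma~\ref{varlemma2}(ii), bound $-S_0(V,W)$ via Lemma~\ref{s0lemma}, apply Lemma~\ref{varlemma1} to the $V$-factor, and use the identity $\sum_{M\subseteq[n],\,1\leq|M|\leq 2p}\Var(U_M(W))=\Var(W^2)=\E[W^4]-\sigma_{n,W}^4$ for the $W$-factor (the key observation being that $2p\leq 2q-1$). If anything, you are slightly more explicit than the paper, which omits the citation of Lemma~\ref{s0lemma} even though the constant $C_{p,q}$ in the final bound comes from there.
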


\begin{proof}
This follows from Lemma \ref{varlemma2} (ii), Lemma \ref{varlemma1} (applied to $V$) and from the obvious facts that 
\begin{align*}
&\sum_{\substack{M\subseteq[m\wedge n]:\\ 1\leq|M|\leq 2(p\vee q)-1}}\Var\bigl(U_M(V)\bigr)\leq \sum_{\substack{M\subseteq[m]:\\ 1\leq|M|\leq 2q-1}}\Var\bigl(U_M(V)\bigr)\,,\\
&\sum_{\substack{M\subseteq[m\wedge n]:\\ 1\leq|M|\leq 2(p\vee q)-1}}\Var\bigl(U_M(W)\bigr)\leq \sum_{\substack{M\subseteq[n]:\\ 1\leq|M|\leq 2p}}\Var\bigl(U_M(W)\bigr) =\Var(W^2)
=\E[W^4]-\sigma_{n,W}^4.
\end{align*}
\end{proof}

\section{Proofs}\label{Proofs}
In this Section we provide detailed proofs of Theorem \ref{maintheo}, Theorem \ref{relcomp} and relation \eqref{e:tricky}. 
\subsection{Proof of Theorem \ref{maintheo}}
The proof of Theorem \ref{maintheo} will follow the classical two-step procedure (see \cite[Section15]{Bil1}) of establishing convergence of finite dimensional distributions and checking tightness. 

\subsubsection{Convergence of finite dimensional distributions}\label{fidi}
We fix time points $0\leq t_1<t_2<\ldots<t_l\leq 1$ and, with $r:=ld$, consider the random vector $V=V^{(m)}=(V_1,\dotsc,V_r)^T\in\R^r$ of degenerate $U$-statistics, defined as follows. 
Given an integer $1\leq i\leq r$, write $i= al+b$ with integers $0\leq a\leq d-1$ and $1\leq b\leq l$ and define 
\[V_i:=\mathbf{W}^{(m)}_{t_b}(a+1)\,.\]  
In other words, we have 
\begin{equation*}
 V=\bigl(\mathbf{W}^{(m)}_{t_1}(1),\dotsc,\mathbf{W}^{(m)}_{t_l}(1),\mathbf{W}^{(m)}_{t_1}(2),\dotsc,\mathbf{W}^{(m)}_{t_l}(2),\dotsc,\mathbf{W}^{(m)}_{t_1}(d),\dotsc,\mathbf{W}^{(m)}_{t_l}(d)\bigr)^T\,.
\end{equation*}
Moreover, for the same $i$ we write $n_{m}(i):=\floor{n_m t_b}$, $\sigma_m(i)^2:=\Var(V_i)$, $q_i:=p_{a+1}$ and 
\[\phi_m(i)^2:=\max_ {1\leq j\leq n_m} \sum_{\substack{J\in \D_{p_{a+1}}(n_m(i)):\\ j\in J}} \E\bigl[W^{(m)}_J(a+1)^2\bigr]\,.\]
Then, we have the straightforward inequalities
\begin{align}\label{fd1}
\sigma_m(i)^2\leq 1\quad\text{and}\quad \phi_m(i)^2\leq\rho_{m,a+1}^2\leq \max_{1\leq k\leq d}\rho_{m,k}^2\,.
\end{align}
Let $\Sigma_m=(s^{(m)}_{i,k})_{1\leq i,k\leq r}\in\R^{r\times r}$ denote the covariance matrix of $V$. Then, for $1\leq i\leq k\leq r$, 
such that 
\begin{equation}\label{repik}
i=al+b,\quad k=a'l+b',\quad \text{where}\quad 0\leq a\leq a'\leq d-1\quad \text{and}\quad 1\leq b,b'\leq l\,,
\end{equation}
 we have 
\[s^{(m)}_{i,k}=\E_m\bigl[\mathbf{W}^{(m)}_{t_b}(a+1) \mathbf{W}^{(m)}_{t_{b'}}(a'+1)\bigr]=\delta_{a,a'}\E_m\bigl[\mathbf{W}^{(m)}_{t_b}(a+1)^2\bigr]=\delta_{a,a'}\sigma_m^2(i)\,.\]
In particular, the diagonal elements of $\Sigma_m$ are given by $s^{(m)}_{i,i}=\sigma_m^2(i)$, $1\leq i\leq r$. Thus, thanks to Condition \ref{cond1} 
we have that $\Sigma_m$ converges to the covariance matrix $\Sigma=(s_{i,k})_{1\leq i,k\leq r}$, whose elements are given by 
\begin{equation}\label{limvar}
s_{i,k}=\delta_{a,a'}\lim_{m\to\infty}\sigma_m^2(i)=\delta_{a,a'}v_{a+1}(t_b)\,,
\end{equation}
if again $i$ and $k$ are as in \eqref{repik}.
Finally, we denote by $N^{(m)}=(N^{(m)}_1,\dotsc,N^{(m)}_r)^T$ and $N=(N_1,\dotsc,N_r)^T$ centered Gaussian vectors on $(\Omega,\F,\P)$ with covariance matrices $\Sigma_m$ and $\Sigma$, respectively.

Next we make use of the well-known fact that the collection $\mathcal{H}_3$ of all functions $h\in C^3(\R^r)$, each of whose partial derivative of order $\leq3$ is uniformly bounded, is convergence determining for weak convergence on $\R^3$. Then, by the triangle inequality we have 
\begin{align}\label{fd2}
&\babs{\E_m[h(V)]-\E[h(N)]}\leq\babs{\E_m[h(V)]-\E[h(N^{(m)})]}+\babs{\E[h(N^{(m)})]-\E[h(N)]}\,.
\end{align}
Note that the second term on the right hand side of \eqref{fd2} converges to zero as $m\to\infty$, since $\lim_{m\to\infty}\Sigma_m=\Sigma$. 

In order to deal with the first term as well, we apply the following central lemma, which is a slightly modified version of \cite[Lemma 4.1 (a)]{DP18}, adapted to the present notation.
The main difference is that in the statement of the result in \cite{DP18}, the term $\phi_m(i)^2$ has already been replaced with the more concrete term
$q_i/n_m(i)$, which is its value in the particular case of symmetric $U$-statistics. The proof given in \cite{DP18}, however,  works in the more general present situation as well.

In what follows, for $1\leq i,k\leq r$, we let
\begin{equation*}
V_iV_k=\sum_{\substack{M\subseteq[n_m(i)\vee n_m(k)]:\\\abs{M}\leq q_i+q_k}} U_M(i,k)
\end{equation*}
denote the Hoeffding decomposition of $V_iV_k$. 

\begin{lemma}\label{mdimlemma}
 Under the above assumptions, there are constants $\kappa_{q_i}\in(0,\infty)$, only depending on $q_i$, $1\leq i\leq r$, such that the following holds: For any $h\in \mathcal{H}_3$, 
there are constants $C_1(h),C_2(h)\in[0,\infty)$ such that  
\begin{align}\label{fd3}
&\babs{\E_m[h(V)]-\E[h(N^{(m)})]}\leq \frac{C_1(h)}{4q_1}\sum_{i,k=1}^r(q_i+q_k)\biggl(\sum_{\substack{M\subseteq[n_m(i)\vee n_m(k)]:\\\abs{M}\leq q_i+q_k-1}}\Var_m\bigl(U_M(i,k)\bigr)\biggr)^{1/2}\notag\\
&\;\hspace{4.5cm}+\frac{2C_2(h)\sqrt{r}}{9q_1}\sum_{i=1}^r q_i\sigma_m(i)\biggl(\sum_{\substack{M\subseteq[n_m(i)]:\\\abs{M}\leq 2q_i-1}}\Var_m\bigl(U_M(i,i)\bigr)\biggr)^{1/2}\notag\\
&\;\hspace{4.5cm}+\frac{\sqrt{2r}C_2(h)}{9q_1}\sum_{i=1}^r q_i\sqrt{\kappa_{q_i}}  \sigma_m(i)^3\phi_m(i)\,.
\end{align}
\end{lemma}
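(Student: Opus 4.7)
The plan is to adapt the exchangeable-pair Stein's method argument developed in \cite[proof of Lemma 4.1]{DP18} to the present setting of a vector of non-symmetric, completely degenerate $U$-statistics of possibly different orders $q_1,\dots,q_r$. As the text already hints, the structure of that proof transfers with essentially no change; the only substantive modification is that the single-index variance contribution $q_i/n_m(i)$, which is the value relevant for the symmetric kernels of \cite{DP18}, must be replaced by the generic quantity $\phi_m(i)^2$ defined above.

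The first step is to introduce, for a given $h\in\mathcal{H}_3$, the solution $\phi_h$ of the multivariate Stein equation associated with $N^{(m)}\sim\mathcal{N}(0,\Sigma_m)$, namely
\[
\sum_{i,k=1}^r s_{i,k}^{(m)}\,\partial_i\partial_k\phi_h(x)-\sum_{i=1}^r x_i\,\partial_i\phi_h(x)=h(x)-\E[h(N^{(m)})].
\]
Standard regularity estimates (cf.\ \cite[Lemma 3.3]{DP18}) yield constants $C_1(h),C_2(h)\in[0,\infty)$ controlling the second and third partial derivatives of $\phi_h$; these are exactly the constants appearing in \eqref{fd3}. The second step is to construct an exchangeable copy $V'$ of $V$ by picking an index $J$ uniformly at random in $[n_m]$ and replacing $X_J^{(m)}$ by an independent copy: complete degeneracy of each $W^{(m)}(k)$ yields a linear-regression identity of the form $\E[V'-V\mid V]=-\Lambda V$ with $\Lambda=\diag(q_1/n_m,\dots,q_r/n_m)$. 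Substituting $V$ into the Stein equation, using this identity to rewrite the drift term $\E[V_i\partial_i\phi_h(V)]$, and performing a third-order Taylor expansion of $\phi_h(V')-\phi_h(V)$ reduces $\E_m[h(V)]-\E[h(N^{(m)})]$ to a sum of three error families.

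The third step is to bound those three error families, and this is where the three summands in \eqref{fd3} arise. The Hessian-level remainder, which measures how far the conditional second moment of $V_i V_k$ is from its expectation, is controlled via Cauchy--Schwarz by the square root of $\sum_{|M|\leq q_i+q_k-1}\Var_m(U_M(i,k))$, producing the first summand. The diagonal correction needed to make the regression identity compatible with the full Hessian of the Stein equation contributes the middle summand, again via Cauchy--Schwarz applied to the non-trivial Hoeffding components of $V_i^2-\sigma_m(i)^2$. Finally, the third-order Taylor remainder, bounded pointwise by $C_2(h)\|V'-V\|^3$, is controlled after a moment computation by $\sigma_m(i)^3\phi_m(i)$ up to a combinatorial constant $\sqrt{\kappa_{q_i}}$. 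The main obstacle I anticipate is precisely this last step: one has to carry out the bookkeeping of how often each individual $X_j^{(m)}$ contributes to the increment $V'_i-V_i$ and then relate the resulting sum of fourth-order terms to $\sigma_m(i)^2\phi_m(i)^2$ via the very definition of $\phi_m(i)^2$ as a maximum over single-index variance sums. Beyond this adaptation, the rest of the argument is a direct transcription of \cite[Section 4]{DP18}.
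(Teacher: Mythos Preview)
Your proposal is correct and aligns exactly with the paper's treatment: the paper does not give a self-contained proof of this lemma but simply states that it is a slightly modified version of \cite[Lemma 4.1 (a)]{DP18}, the only difference being that the term $q_i/n_m(i)$ (valid for symmetric kernels) is replaced by the generic quantity $\phi_m(i)^2$, and that the proof in \cite{DP18} carries over verbatim. You have correctly identified both the reference and the single substantive modification, and your sketch of the exchangeable-pair Stein argument is precisely the content of that reference.
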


We will also rely on the following crucial lemma, which essentially goes back to de Jong's monograph \cite{deJo89}. 
\begin{lemma}\label{cumlemma}
Under Conditions \ref{lindcond} and \ref{fmcond} 
and with the notation of this subsection, we have that 
\begin{equation*}
\lim_{m\to\infty}\E[V_i^4]-3\sigma_m^4(i)=0
\end{equation*}
for all $1\leq i\leq r$.
\end{lemma}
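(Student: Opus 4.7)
Fix an index $1\leq i\leq r$, write $i=al+b$, set $k=a+1$, $t=t_b$, and abbreviate $W:=W^{(m)}(k)$, $W_s:=\mathbf{W}_s^{(m)}(k)$ (so that $V_i=W_t$), $\sigma^2(s):=\E_m[W_s^2]$, $\rho^2:=\rho_{m,k}^2$. Condition \ref{fmcond} states $\E_m[W^4]\to 3=3\sigma^4(1)$, while Condition \ref{lindcond} gives $\rho^2\to 0$ (recall $D_{m,k}\geq 1$). The goal is to show $\E_m[W_t^4]-3\sigma^4(t)\to 0$, which we attack by establishing matching lower and upper bounds as $m\to\infty$.

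For the lower bound, $W_t$ is itself a completely degenerate $U$-statistic of order $p_k$ based on $X_1^{(m)},\dotsc,X_{\lfloor n_m t\rfloor}^{(m)}$, so Lemmas \ref{s0lemma} and \ref{s0lemma2}, applied to $W_t$ in place of $W$ and noting $\sigma^2(t)\leq 1$ and $\rho_{m,W_t}^2\leq\rho^2$, yield
\[
\E_m[W_t^4]-3\sigma^4(t)\;\geq\; S_0(W_t,W_t)-2p_k\sigma^2(t)\rho^2\;\geq\;-(C_{p_k,p_k}+2p_k)\rho^2\;\longrightarrow\;0.
\]
For the matching upper bound I would exploit the martingale structure of $s\mapsto W_s$ with respect to $\F_s^{(m)}:=\sigma(X_j^{(m)}:j\leq\lfloor n_m s\rfloor)$: by degeneracy, $\E_m[W_J^{(m)}(k)\mid\F_s^{(m)}]=0$ whenever $J\not\subseteq[\lfloor n_m s\rfloor]$, so $\E_m[W_u\mid\F_s^{(m)}]=W_s$ for $s\leq u$. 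Decomposing $W=W_t+B$ with $B:=W-W_t$, the martingale property gives $\E_m[W_t^3 B]=0$ and $\sigma^2:=\Var_m(W)=\sigma^2(t)+\sigma^2(1,t)$ with $\sigma^2(1,t):=\Var_m(B)$. Expanding $(W_t+B)^4$ and subtracting $3\sigma^4=3(\sigma^2(t)+\sigma^2(1,t))^2$ yields the crucial identity
\[
\E_m[W^4]-3\sigma^4\;=\;\bigl(\E_m[W_t^4]-3\sigma^4(t)\bigr)+\bigl(\E_m[B^4]-3\sigma^4(1,t)\bigr)+6\bigl(\E_m[W_t^2 B^2]-\sigma^2(t)\sigma^2(1,t)\bigr)+4\E_m[W_t B^3].
\]

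The left-hand side vanishes by Condition \ref{fmcond}. The second summand is also non-negative modulo $o(1)$: indeed, $B$ is itself a completely degenerate $U$-statistic of order $p_k$ (its Hoeffding components are just the kernels $W_J^{(m)}(k)$ with $J\not\subseteq[\lfloor n_m t\rfloor]$), so the same application of Lemmas \ref{s0lemma} and \ref{s0lemma2} used above gives $\E_m[B^4]-3\sigma^4(1,t)\geq -(C_{p_k,p_k}+2p_k)\rho^2\to 0$. The main obstacle, and the technical heart of the argument, is to show that the two cross terms admit analogous lower bounds $-o(1)$. For this I would rewrite $\E_m[W_t^2 B^2]-\sigma^2(t)\sigma^2(1,t)=\Cov_m(W_t^2,B^2)=\sum_{M\neq\emptyset}\E_m[U_M(W_t)U_M(B)]$ and bound it via Cauchy--Schwarz by $\bigl(\sum_M\Var_m(U_M(W_t))\bigr)^{1/2}\bigl(\sum_M\Var_m(U_M(B))\bigr)^{1/2}$; each factor is controlled by Lemma \ref{varlemma1} in terms of the fourth-cumulant discrepancies and $\sigma^2\rho^2$ terms, which were just shown to vanish. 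A parallel argument, expanding $W_tB^3$ via the Hoeffding decomposition of $W_tB$ and invoking Lemma \ref{varlemma2}(i) with $V=W_t$, $W=B$ together with the trivial bound $\E_m[W_tB]=0$, controls $|\E_m[W_tB^3]|$ by the same class of quantities. Once all four summands are bounded below by $-o(1)$ while their sum tends to 0, each must tend to 0 individually, yielding in particular $\E_m[V_i^4]-3\sigma_m(i)^4\to 0$ as required.
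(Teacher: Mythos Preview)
Your lower bound via Lemmas \ref{s0lemma} and \ref{s0lemma2} is correct, and the decomposition identity
\[
\E_m[W^4]-3\sigma^4=\bigl(\E_m[W_t^4]-3\sigma^4(t)\bigr)+\bigl(\E_m[B^4]-3\sigma^4(1,t)\bigr)+6\,\Cov_m(W_t^2,B^2)+4\,\E_m[W_tB^3]
\]
is also fine. The gap is in the treatment of the two cross terms. You write that Cauchy--Schwarz controls $|\Cov_m(W_t^2,B^2)|$ by a product of square roots involving the fourth-cumulant discrepancies of $W_t$ and $B$, ``which were just shown to vanish''. But nothing of the sort was shown: you established only the \emph{lower} bounds $A_m:=\E_m[W_t^4]-3\sigma^4(t)\geq -o(1)$ and $B_m:=\E_m[B^4]-3\sigma^4(1,t)\geq -o(1)$; proving $A_m\to 0$ is precisely the goal. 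Your Cauchy--Schwarz bound therefore reads $|C_m|\lesssim\sqrt{(A_m+o(1))(B_m+o(1))}$, which is circular. The same remark applies verbatim to the bound on $|D_m|=|\E_m[W_tB^3]|$. With only these estimates, the system $A_m+B_m+6C_m+4D_m=o(1)$, $A_m,B_m\geq -o(1)$, $|C_m|,|D_m|\lesssim\sqrt{A_mB_m}+o(1)$ does \emph{not} force $A_m\to 0$ (for instance $A_m=B_m=1$, $C_m=-1/3$, $D_m=0$ is compatible with all of this). To make your scheme work you would need genuine lower bounds of the type $6C_m+4D_m\geq -o(1)$ that do not go through $A_m,B_m$; this is possible but requires a direct combinatorial analysis in the spirit of the proof of Lemma \ref{s0lemma}, which your sketch does not provide.

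For comparison, the paper takes a completely different and much shorter route: it observes that $V_i=\sum_{J\in\D_{p_{a+1}}(n_m)}b_J W_J^{(m)}(a+1)$ with the \emph{rank-one} coefficients $b_J=\prod_{j\in J}\1_{\{j\leq n_m(i)\}}\in[0,1]$, and then invokes Proposition~4.1.2 of de Jong's monograph, which asserts exactly that rank-one thinning preserves the asymptotic vanishing of the fourth cumulant under Conditions~\ref{lindcond} and~\ref{fmcond}. No decomposition $W=W_t+B$ and no analysis of cross terms is needed.
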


\begin{proof}
Fix $1\leq i\leq r$ given as in \eqref{repik}. Then, we have 
\[V_i=\sum_{J\in\D_{p_{a+1}}(n_m(i))}W_J^{(m)}(a+1)=\sum_{J\in\D_{p_{a+1}}(n_m)}b_J W_J^{(m)}(a+1)\,,\]
where, for $J\in\D_{p_{a+1}}(n_m)$, we let 
\[b_J=\prod_{j\in J} \1_{\{1\leq j\leq n_m(i)\}}\,.\]
This ensures that the coefficient sequence $(b_J)_{J\in\D_{p_{a+1}}(n_m)}$ is \textit{of rank one} in de Jong's terminology (see \cite[Subsection 4.1]{deJo89} and Remark \ref{monorem} (b), below) and we obviously have $0\leq \1_{\{1\leq j\leq n_m(i)\}}\leq1$ for all $1\leq j\leq n_m$. Hence, by \cite[Proposition 4.1.2.]{deJo89} we have $\lim_{m\to\infty}\E[V_i^4]-3\sigma_m^4(i)=0$.
\end{proof}

\begin{remark}\label{monorem}
\begin{enumerate}[(a)]
\item  We mention that the statement of \cite[Proposition 4.1.2.]{deJo89} actually only implies the statement $\lim_{m\to\infty}\E[V_i^4]-3\sigma_m^4(i)=0$ of Lemma \ref{cumlemma} under the stronger condition that $D_{k,m}$ is bounded in $m$ for each $1\leq k\leq d$. However, as can be seen from the proof of \cite[Proposition 4.1.2.]{deJo89}, the result remains true under the more relaxed Condition \ref{lindcond}, since the quantities $\tau'$ and $(\tau')^*$ appearing there still converge to zero, thanks to the inequality 
 \[(\tau')^*\leq D_{m}\tau'\lesssim  D_m \rho_m^2\,,\]
the second part of which holds true by virtue of \cite[Proposition 2.9]{DP17}. The first (and easy) part of it is proved in \cite{deJo89}.
\item In \cite{deJo89} the following definition of a \textit{rank one coefficient sequence} $(a_J)_{J\in\D_p(n_m)}$ is given: One has $|a_J|\leq 1$ for all $J\in\D_p(n_m)$ and there is a sequence $(c_j)_{1\leq j\leq n_m}$ such that $a_J=\prod_{j\in J} c_j$ for all $J\in\D_p(n_m)$. 
\end{enumerate}
\end{remark}
\medskip

In order to prove that the first term on the right hand side of \eqref{fd2} converges to zero, we will make sure that the right hand side of \eqref{fd3} goes to zero as $m\to\infty$. By \eqref{fd1} and the assumption of Theorem \ref{maintheo}, for the last term we have 
\begin{equation}\label{fd6}
 \lim_{m\to\infty}\frac{\sqrt{2r}C_2(h)}{9q_1}\sum_{i=1}^r q_i\sqrt{\kappa_{q_i}}  \sigma_m(i)^3\phi_m(i)=0.
\end{equation}

 In order to deal with the first two terms, we invoke Corollaries \ref{varcor1} and \ref{varcor2}. Thus, suppose again that 
$1\leq i\leq k\leq r$ are given by \eqref{repik}.

\textbf{Case 1:} $a<a'$: In this case, we have $q_i=p_{a+1}<p_{a'+1}=q_k$ and, hence, we are in the situation of Corollary \ref{varcor2}. Thus, we have that 
\begin{align}\label{fd4}
&\sum_{\substack{M\subseteq[n_m(i)\vee n_m(k)]:\\\abs{M}\leq q_i+q_k-1}}\Var_m\bigl(U_M(i,k)\bigr)\leq
\bigl(C_{q_i,q_k}+q_k\bigr)\max\bigl(\sigma^2_m(i)\phi^2_m(k),\sigma^2_m(k)\phi^2_m(i)\bigr)\notag\\
&\;+\Bigl(\E[V_k^4]-3\sigma_m^4(k)+\kappa_{q_k}\sigma_{m}^2(k)\phi_{m}^2(k)\Bigr)^{1/2}\cdot \Bigl(\E[V_i^4]-\sigma_{m}^4(i)\Bigr)^{1/2}\notag\\
&\leq \bigl(C_{q_i,q_k}+q_k\bigr)\max_{1\leq u\leq d}\rho_{m,u}^2\notag\\
&\;+\Bigl(\E[V_k^4]-3\sigma_m^4(k)+\kappa_{q_k}\max_{1\leq u\leq d}\rho_{m,u}^2\Bigr)^{1/2}\cdot \Bigl(\E[V_i^4]\Bigr)^{1/2}\,,
\end{align}
where we have applied \eqref{fd1} for the second inequality and used $\kappa_{q_k}$ to denote a finite constant depending only on $q_k$. 
By Lemma \ref{cumlemma} and Condition \ref{lindcond}, the right hand side of \eqref{fd4} thus converges to zero as $m\to\infty$. 

\textbf{Case 2:} $a=a'$: In this case, we have $q_i=p_{a+1}=p_{a'+1}=q_k$ and, hence, we are in the situation of Corollary \ref{varcor1}. Thus, we have that 
\begin{align}\label{fd5}
&\sum_{\substack{M\subseteq[n_m(i)\vee n_m(k)]:\\\abs{M}\leq q_i+q_k-1}}\Var_m\bigl(U_M(i,k)\bigr)
=\sum_{\substack{M\subseteq[n_m(k)]:\\\abs{M}\leq q_i+q_k-1}}\Var_m\bigl(U_M(i,k)\bigr)\notag\\
&\leq \E[V_i^4]-3\sigma_{m}^4(i)+2q_i\sigma_{m}^2(i)\phi_{m}^2(i) +\bigl(C_{q_i,q_i}+2q_i \bigr)\phi_{m}^2(k)\sigma_{m}^2(k)\notag\\
&\;+\biggl(\E[V_i^4]-3\sigma_{m}^4(i)+\kappa_{q_i}\sigma_{m}^2(i)\phi_{m}^2(i)\biggr)^{1/2}\cdot \biggl(\E[V_k^4]-3\sigma_{m}^4(k)+\kappa_{q_k}\sigma_{m}^2(k)\phi_{m}^2(k)\biggr)^{1/2}\notag\\
&\leq \E[V_i^4]-3\sigma_{m}^4(i)+2q_i \max_{1\leq u\leq d}\rho_{m,u}^2+\bigl(C_{q_i,q_i}+2q_i \bigr)\max_{1\leq u\leq d}\rho_{m,u}^2\notag\\
&\;+\biggl(\E[V_i^4]-3\sigma_{m}^4(i)+\kappa_{q_i}\max_{1\leq u\leq d}\rho_{m,u}^2\biggr)^{1/2}\cdot \biggl(\E[V_k^4]-3\sigma_{m}^4(k)
+\kappa_{q_k}\max_{1\leq u\leq d}\rho_{m,u}^2\biggr)^{1/2}\,,
\end{align}
where we have again applied \eqref{fd1} for the second inequality and used $\kappa_{q_k}$ and $\kappa_{q_i}$ to denote finite constants depending only on $q_k$ and $q_i$, respectively.  Again, by Lemma \ref{cumlemma} and Condition \ref{lindcond}, the right hand side of \eqref{fd5} thus converges to zero as $m\to\infty$. 

From \eqref{fd3},\eqref{fd1}, \eqref{fd6}, \eqref{fd4} and \eqref{fd5} we thus conclude that 
\begin{equation*}
 \lim_{m\to\infty} \babs{\E_m[h(V)]-\E[h(N^{(m)})]}=0
\end{equation*}
and, thus, by \eqref{fd2} that 
\begin{equation*}
 \lim_{m\to\infty} \babs{\E_m[h(V)]-\E[h(N)]}=0
\end{equation*}
for each $h\in\mathcal{H}_3$.
Note further that, by the definition of $\Sigma$, we have the distributional identity 
\begin{align*}
 N\stackrel{\D}{=}\bigl(\mathbf{Z}_{t_1}(1),\dotsc,\mathbf{Z}_{t_l}(1),\dotsc,\mathbf{Z}_{t_1}(d),\dotsc,\mathbf{Z}_{t_l}(d)\bigr)^T\,.
\end{align*}
Hence, we have established the convergence of the finite dimensional distributions of $\mathbf{W}$ to those of $\mathbf{Z}$.  

\subsubsection{Conclusion of the argument}\label{tightness}

We will now finish the proof of Theorem \ref{maintheo} by proving tightness of the sequence $(\mathbf{W}^{(m)})_{m\in\N}$ in $D[0,1]$. This will be done by verifying that 
\begin{equation}\label{tight1}
\omega\bigl(\mathbf{W}^{(m)},\delta\bigr)\stackrel{\P_m}{\longrightarrow}0\,,\quad\text{as }m\to\infty\,,
\end{equation}
where, for $x\in D[0,1]$ and $\delta>0$,
\[\omega(x,\delta):=\sup\{|x(t)-x(s)|\,:\, s,t\in[0,1]\,, |t-s|<\delta\}\]
denotes the \textit{modulus of continuity} of $x$.
Since $\mathbf{W}^{(m)}_0=0$ for all $m\in\N$ and by the finite-dimensional distribution convergence established in Subsection \ref{fidi}, this will imply Theorem \ref{maintheo} by the Corollary to 
\cite[Theorem 13.4]{Bil}. Let us introduce the following notation: Fix $1\leq l\leq d$ that we will from now on suppress from the notation whenever convenient, and, for $m\in\N$ and $0\leq j\leq n_m$, define $\Delta_j^{(m)}:= \mathbf{W}^{(m)}_{\frac{j}{n_m}}(l)-\mathbf{W}^{(m)}_{\frac{j-1}{n_m}}(l)$, $\F_j^{(m)}:=\sigma(X_1^{(m)},\dotsc,X_j^{(m)})$ and $S_j^{(m)}:=\sum_{i=1}^j \Delta_i^{(m)}$. Then, by degeneracy, for each $m\in\N$, $(S_j^{(m)})_{1\leq j\leq n_m}$ is an $( \F_j^{(m)})_{0\leq j\leq n_m}$- martingale. Note that with this notation at hand, we have $\mathbf{W}^{(m)}_t=S_{\floor{n_mt}}^{(m)}$ for each $t\in[0,1]$ and \cite[Theorem 8.4]{Bil1} and the Corollary to \cite[Theorem 13.4]{Bil} together imply that \eqref{tight1} holds, if, for each $k\in\N$ we have 
\begin{equation}\label{tight2}
\lim_{\lambda\to\infty}\limsup_{m\to\infty}\lambda^2\P_m\Bigl(\max_{0\leq i\leq n_m-k}\babs{S_{k+i}^{(m)}-S_k^{(m)}}\geq\lambda\Bigr)=0\,. 
\end{equation}
Note that the additional $\sqrt{n}$ appearing in the statement of \cite[Theorem 8.4]{Bil1} does not appear here, since our processes are already normalized at time $t=1$.
Let us thus fix $k\in\N$. Then, the sequence $(S_{k+i}^{(m)}-S_k^{(m)})_{0\leq i\leq n_m-k}$ is an $( \F_{k+i}^{(m)})_{0\leq i\leq n_m-k}$- martingale. Thus, by first applying Markov's and then Doob's $L^4$-inequality, we have that 
\begin{align}\label{tight3}
&\lambda^2\P_m\Bigl(\max_{0\leq i\leq n_m-k}\babs{S_{k+i}^{(m)}-S_k^{(m)}}\geq\lambda\Bigr)\notag\\
&\leq\E_m\Bigl[\Bigl(\max_{0\leq i\leq n_m-k}\babs{S_{k+i}^{(m)}-S_k^{(m)}}\Bigr)^2\,\1_{\{\max_{0\leq i\leq n_m-k}\abs{S_{k+i}^{(m)}-S_k^{(m)}}\geq\lambda\}}\Bigr]\notag\\
&\leq\frac{1}{\lambda^2}\E_m\Bigl[\Bigl(\max_{0\leq i\leq n_m-k}\babs{S_{k+i}^{(m)}-S_k^{(m)}}\Bigr)^4\Bigr]
\leq \Bigl(\frac{4}{3}\Bigr)^4\frac{1}{\lambda^2}\E_m\Bigl[\babs{S_{n_m}^{(m)}-S_k^{(m)}}^4\Bigr]\notag\\
&\leq \Bigl(\frac{4}{3}\Bigr)^4\frac{8}{\lambda^2}\Bigl(\E_m\babs{S_{n_m}^{(m)}}^4 +\E_m\babs{S_k^{(m)}}^4\Bigr)\,.
\end{align}
Now, firstly, by Condition \ref{fmcond}, we have 
\begin{equation}\label{tight4}
\lim_{m\to\infty}\E_m\babs{S_{n_m}^{(m)}}^4=\lim_{m\to\infty}\E_m\bigl[W^{(m)}(l)^4]=3\,.
\end{equation}
Secondly, using the inequality $|\sum_{i=1}^n a_i|^4\leq n^3 \sum_{i=1}^n |a_i|^4$, and by Condition \ref{lindcond} we obtain that   
\begin{align}\label{tight5}
\E_m\babs{S_k^{(m)}}^4&=\E_m\Babs{\sum_{J\in D_{p_l}(k)} W_J^{(m)}(l)}^4\leq \binom{k}{p}^3 \sum_{J\in D_{p_l}(k)}\E_m\babs{W_J^{(m)}(l)}^4\notag\\
&\leq \binom{k}{p_l}^3 D_{m,l} \sum_{J\in D_{p_l}(k)}\Bigl(\E_m\babs{W_J^{(m)}(l)}^2\Bigr)^2\notag\\
&\leq \binom{k}{p_l}^3 D_{m,l} \rho_{m,l}^2 \sum_{J\in D_{p_l}(k)}\E_m\babs{W_J^{(m)}(l)}^2\notag\\
&\leq \binom{k}{p_l}^3 D_{m,l}\rho_{m,l}^2
\stackrel{m\to\infty}{\longrightarrow}0\,,
\end{align}
where we have used the fact that $\E_m\babs{W_J^{(m)}(l)}^2\leq \rho_{m,l}^2$ for all $J\in D_{p_l}(k)$.
Thus, \eqref{tight3}-\eqref{tight5} imply that 
\[\limsup_{m\to\infty}\lambda^2\P_m\Bigl(\max_{0\leq i\leq n_m-k}\babs{S_{k+i}^{(m)}-S_k^{(m)}}\geq\lambda\Bigr)\leq \Bigl(\frac{4}{3}\Bigr)^4\frac{24}{\lambda^2}=\frac{2048}{27}\,\frac{1}{\lambda^{2}} \]
for each $\lambda>0$ and $k\in\N$ fixed, so that \eqref{tight2} is indeed satisfied. This finishes the argument for tightness. 
\medskip

\subsection{An additional argument explaining the continuity of the limiting process $\mathbf{Z}$}
Note that, by \cite[Theorem 13.4]{Bil} our conditions necessarily entail the remarkable fact that the limiting Gaussian process $\mathbf{Z}$ has continuous paths, which might be quite surprising at first glance. Thus, we will now give another argument for this fact, which additionally explains why it is true. To this end, define the vector $\mathbf{V}^{(m)}=(\mathbf{V}^{(m)}(1),\dotsc,\mathbf{V}^{(m)}(d))$ of continuous processes, defined by linear interpolation, i.e. for $1\leq l\leq d$ we let 
\begin{align*}
\mathbf{V}^{(m)}_t(l)&:=\mathbf{W}^{(m)}_{\frac{i-1}{n_m}}(l)+ \bigl(n_mt -(i-1)\bigr)\bigl(\mathbf{W}^{(m)}_{\frac{i}{n_m}}(l)-\mathbf{W}^{(m)}_{\frac{i-1}{n_m}}(l)\bigr)\,,\quad \frac{i-1}{n_m}\leq t<\frac{i}{n_m},
\end{align*} 
and $\mathbf{V}^{(m)}_1(l)= \mathbf{W}^{(m)}_1(l)=W^{(m)}(l)$. Then, for fixed $1\leq l\leq d$ and with the same notation as above, it is easy to see that 
\[\norm{\mathbf{V}^{(m)}(l)-\mathbf{W}^{(m)}(l)}_\infty=\max_{1\leq i\leq n_m}\babs{\Delta_i^{(m)}}\,.\]
Thus, using e.g. \cite[Theorem 3]{ISH} in the third inequality below, for $\epsilon>0$ we can bound
\begin{align*}
&\P_m\bigl(\norm{\mathbf{V}^{(m)}(l)-\mathbf{W}^{(m)}(l)}_\infty>\epsilon\bigr)=\P_m\bigl(\max_{1\leq i\leq n_m}\babs{\Delta_i^{(m)}}>\epsilon\bigr)
\leq\sum_{i=1}^{n_m}\P_m\bigl(\babs{\Delta_i^{(m)}}>\epsilon\bigr)\\
&\leq \epsilon^{-4}\sum_{i=1}^{n_m}\E_m\Babs{\sum_{\substack{J\in\D_{p_l}(i):\\ i\in J}} W^{(m)}_J(l)}^4
\leq B(p_l)\epsilon^{-4}\sum_{i=1}^{n_m}\E_m\biggl[\biggl(\sum_{\substack{J\in\D_{p_l}(i):\\ i\in J}} W^{(m)}_J(l)^2\biggr)^2\biggr]\\
&=B(p_l)\epsilon^{-4}\sum_{i=1}^{n_m}\sum_{\substack{J,K\in\D_{p_l}(i):\\ i\in J\cap K}}\E_m\bigl[W^{(m)}_J(l)^2 W^{(m)}_K(l)^2\bigr]\\
&\leq B(p_l)\epsilon^{-4}\sum_{i=1}^{n_m}\sum_{\substack{J,K\in\D_{p_l}(i):\\ i\in J\cap K}}\Bigl(\E_m\bigl[W^{(m)}_J(l)^4\bigr]\E_m\bigl[W^{(m)}_K(l)^4\bigr] \Bigr)^{1/2}\\
&\leq B(p_l)\epsilon^{-4}D_{m,l}\sum_{i=1}^{n_m}\sum_{\substack{J,K\in\D_{p_l}(i):\\ i\in J\cap K}}\E_m\bigl[W^{(m)}_J(l)^2\bigr]\E_m\bigl[W^{(m)}_K(l)^2\bigr] \\
&=B(p_l)\epsilon^{-4}D_{m,l}\sum_{i=1}^{n_m}\biggl(\sum_{\substack{J\in\D_{p_l}(i):\\ i\in J}}\E_m\bigl[W^{(m)}_J(l)^2\bigr]\biggr)^2\\
&\leq B(p_l) \epsilon^{-4}D_{m,l}\rho_{m,l}^2\sum_{i=1}^{n_m}\sum_{\substack{J\in\D_{p_l}(i):\\ i\in J}}\E_m\bigl[W^{(m)}_J(l)^2\bigr]\\
&=B(p_l)\epsilon^{-4}D_{m,l}\rho_{m,l}^2\stackrel{m\to\infty}{\longrightarrow}0\,,
\end{align*} 
where $B(p_l)$ is a finite constant only depending on $p_l$, see \cite[Theorem 3]{ISH}.
Thus, we have proved that, for each $1\leq l\leq d$, $\norm{\mathbf{V}^{(m)}(l)-\mathbf{W}^{(m)}(l)}_\infty$ and, a fortiori their Skorohod distance, converges to zero in probability as $m\to\infty$, so that \cite[Theorem 3.1]{Bil} implies that the processes $\mathbf{V}^{(m)}(l)$ and $\mathbf{W}^{(m)}(l)$ must indeed have the same distributional limit, if any.

\subsection{Proof of Theorem \ref{relcomp}}\label{ss:proofrelcomp}
Suppose that Conditions \ref{lindcond} and \ref{fmcond} hold for the sequence $(\mathbf{W}^{(m)})_{m\in\N}$. Since the above argument for tightness does not make use of Condition \ref{cond1}, tightness still holds. Let $(\mathbf{W}^{(m_l)})_{l\in\N}$ be a given subsequence. By relative compactness, there is a further subsequence $(\mathbf{W}^{(m_{l_k})})_{k\in\N}$ that converges in distribution to some process $\mathbf{Z}$. Again by \cite[Theorem 13.4]{Bil} we can conclude that $\mathbf{Z}$ has values in $C([0,1];\R^d)$. Thus, the projections $\pi_t: D([0,1];\R^d)\rightarrow\R^d$, $x\mapsto x(t)$, $t\in[0,1]$, are $\P_{\mathbf{Z}}$-a.s. continuous, where $\P_{\mathbf{Z}}$ denotes the law of $\mathbf{Z}$ on $D([0,1];\R^d)$. Hence, the continuous mapping theorem implies that the finite-dimensional distributions of $(\mathbf{W}^{(m_{l_k})})_{k\in\N}$ converge to those of $\mathbf{Z}$ and, e.g. by a uniform integrability argument, one makes sure that the covariance matrices of the finite dimensional distributions of $(\mathbf{W}^{(m_{l_k})})_{k\in\N}$ also converge to the respective ones of $\mathbf{Z}$. Thus, the argument leading to \eqref{limvar} goes through with the right hand side replaced by the variance of the corresponding coordinate of $\mathbf{Z}$ at $t_b$. Now, since Lemma \ref{cumlemma} applies under Conditions \ref{lindcond} and \ref{fmcond}, the same reasoning as in Subsubsection \ref{fidi} ensures that the finite dimensional distributions of $(\mathbf{W}^{(m_{l_k})})_{k\in\N}$ have a Gaussian limit, implying that $\mathbf{Z}$ is indeed a continuous, centered Gaussian process. {\red The non-correlation, and therefore the independence, of the components of ${\bf Z}$ follows from the fact that Hoeffding degenerate $U$-statistics of distinct orders are orthogonal in $L^2$. The proof is complete.}
{{}
\subsection{Proof of relation \eqref{e:tricky}}\label{ss:tricky}

We fix $p\geq 3$ and $2\leq a\leq p-1$ as in Example \ref{ex:fractional}, and adopt the notation introduced therein. It is sufficient to show that
\begin{equation}\label{e:tricky2}
|F^0_m \cap [ \lfloor tm \rfloor ] ^p | \sim |F^0_{\lfloor tm\rfloor}|,
\end{equation}
a relation that is implied by the next statement.

\begin{lemma} Consider a nondecreasing integer-valued sequence $(\ell(m))_{m\geq 1}$ such that, as $m\to \infty$, $\ell(m) \sim \alpha\cdot m$, for some $\alpha\in (0,1)$. Then
$$
|F^0_m \cap [ \ell(m)]^p | \sim |F^0_{ \ell(m) }|.
$$

\end{lemma}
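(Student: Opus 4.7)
The plan is to exploit properties (a) and (b) of $\varphi$ to pin $\varphi(\mathbf{s})$ down to a narrow range determined by $\max(\mathbf{s})$, and then to sandwich $|F^0_m \cap [\ell(m)]^p|$ between two binomial coefficients whose ratio tends to $1$.

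First I would establish the following monotonicity claim: for every $\mathbf{s} \in \mathbb{N}^a$ with $M := \max(\mathbf{s})$,
\[
(M-1)^a < \varphi(\mathbf{s}) \leq M^a.
\]
The upper bound is direct from property (a). For the lower bound, the hypothesis $\max(\mathbf{s}) = M$ means $\mathbf{s} \in [M]^a \setminus [M-1]^a$; injectivity of $\varphi$ then gives $\varphi(\mathbf{s}) \in \varphi([M]^a) \setminus \varphi([M-1]^a)$, which by property (b) (applied with $k = M-1$) is contained in $[M^a] \setminus [(M-1)^a]$.

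Next, I set $k := \lfloor \ell(m)^{1/a} \rfloor$, so that $k^a \leq \ell(m) < (k+1)^a$, and use the monotonicity claim to sandwich the cardinality. For the lower bound, any $\mathbf{t} \in \Delta^p_k$ satisfies $\pi_{S_i} \mathbf{t} \in [k]^a$, hence $\varphi(\pi_{S_i} \mathbf{t}) \leq k^a \leq \ell(m)$ by property (a). Combined with the injectivity of the map $\mathbf{t} \mapsto (\varphi(\pi_{S_i} \mathbf{t}))_{1\leq i\leq p}$ on $\Delta^p_{\lfloor m^{1/a}\rfloor}$ (which follows from the injectivity of $\varphi$ together with the covering condition $\bigcup_{i=1}^p S_i = [p]$), this produces an injection of $\Delta^p_k$ into $F^0_m \cap [\ell(m)]^p$, so that
\[
|F^0_m \cap [\ell(m)]^p| \geq \binom{k}{p} = |F^0_{\ell(m)}|.
\]

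For the matching upper bound, consider any $\mathbf{t} \in \Delta^p_{\lfloor m^{1/a}\rfloor}$ whose image lies in $[\ell(m)]^p$. Then $\varphi(\pi_{S_i}\mathbf{t}) \leq \ell(m) < (k+1)^a$ for every $i$, and the monotonicity claim forces $\max(\pi_{S_i}\mathbf{t}) \leq k+1$. Since $\bigcup_{i=1}^p S_i = [p]$, every entry of $\mathbf{t}$ is then at most $k+1$, i.e., $\mathbf{t} \in \Delta^p_{k+1}$. By the same injectivity,
\[
|F^0_m \cap [\ell(m)]^p| \leq \binom{k+1}{p}.
\]
Because $p$ is fixed and $k\to \infty$, $\binom{k+1}{p}/\binom{k}{p} \to 1$, and the two bounds pin $|F^0_m \cap [\ell(m)]^p|/|F^0_{\ell(m)}|$ to $1$. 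The only nontrivial piece is the monotonicity claim for $\varphi$; everything else reduces to elementary counting with binomial coefficients.
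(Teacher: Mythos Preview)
Your proof is correct and follows essentially the same route as the paper's: both arguments sandwich $|F^0_m \cap [\ell(m)]^p|$ between $|F^0_{k^a}| = \binom{k}{p}$ and $|F^0_{(k+1)^a}| = \binom{k+1}{p}$ with $k = \lfloor \ell(m)^{1/a}\rfloor$, using exactly the inclusion $F^0_m \cap [\ell(m)]^p \subseteq F^0_{(k+1)^a}$ that your monotonicity claim for $\varphi$ justifies. The paper simply phrases the comparison as a difference bound $|F^0_{(k+1)^a}| - |F^0_{k^a}| = o(m^{p/a})$ rather than a ratio of binomial coefficients, and leaves the range estimate $(M-1)^a < \varphi(\mathbf{s}) \leq M^a$ implicit; your version makes this key step explicit.
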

\begin{proof} For every $\ell\geq 1$ we write $k = k(\ell)$ to denote the unique integer such that $\ell\in [k^a, (k+1)^a)$, in such a way that $F_{\ell} = F_{k^a(\ell)}$. We have that 
$$
|F^0_m \cap [ \ell(m)]^p | = |F^0_{ \ell(m) }| + |  (F^0_m \cap [\ell(m)]^p ) \backslash F^0_{ k^a(\ell(m)) }  |.
$$
Using the relations
$$
|  (F^0_m \cap [\ell(m)]^p ) \backslash F^0_{ k^a(\ell(m)) }  | \leq | F^0_{ (k(\ell(m))+1)^a } | - | F^0_{ k^a(\ell(m)) } | \sim  k(\ell(m))^{p-1} \leq \ell(m)^{(p-1)/a} = o(m^{p/a}),
$$
we immediately deduce the desired conclusion.

\end{proof}

}
\normalem
\bibliographystyle{alpha}
\bibliography{uprocess}
\end{document}